\documentclass[11pt,letterpaper,reqno]{amsart}
\usepackage{amsfonts}
\usepackage{amsmath}
\usepackage{amsthm, amscd}
\usepackage{mathtools}
\usepackage{amssymb}
\usepackage{mathrsfs}
\usepackage{graphicx}
\usepackage{caption}
\usepackage{subcaption}
\usepackage{float}
\usepackage{xypic}
\usepackage[abs]{overpic}
\usepackage[alphabetic]{amsrefs}
\usepackage{etex}
\usepackage{tikz-cd}
\usepackage{hyperref}
\hypersetup{
	colorlinks,
	linkcolor=[rgb]{0,0,0.7},
	urlcolor=[rgb]{0,0,0.4},
	citecolor=[rgb]{0.4,0.1,0}
}

\allowdisplaybreaks

\theoremstyle{plain}\newtheorem{Theorem}{Theorem}[section]
\theoremstyle{plain}\newtheorem{Corollary}[Theorem]{Corollary}
\theoremstyle{plain}\newtheorem{Lemma}[Theorem]{Lemma}
\theoremstyle{plain}
\theoremstyle{plain}\newtheorem{Proposition}[Theorem]{Proposition}
\theoremstyle{plain}
\theoremstyle{plain}
\theoremstyle{plain}\newtheorem*{Claim*}{Claim}
\theoremstyle{plain}\newtheorem*{Theorem*}{Theorem}
\theoremstyle{plain}\newtheorem*{Lemma*}{Lemma}
\theoremstyle{plain}

\theoremstyle{remark}\newtheorem{remark}[Theorem]{Remark}
\theoremstyle{remark}
\theoremstyle{remark}\newtheorem*{Notation*}{Notation}

\numberwithin{equation}{section}

\DeclareMathOperator{\dist}{dist}

\DeclareMathOperator{\Ima}{Im}

\DeclareMathOperator{\Homeo}{Homeo}
\DeclareMathOperator{\Emb}{Emb}
\DeclareMathOperator{\Imm}{Imm}
\DeclareMathOperator{\Map}{Map}

\newcommand{\bZ}{\mathbb{Z}}

\author{Jianfeng Lin}
\address{Yau Mathematical Sciences Center, Tsinghua University, Beijing 100084, China}
\email{linjian5477@mail.tsinghua.edu.cn}
\author{Yi Xie}
\address{Beijing International Center for Mathematical Research, Peking University, Beijing 100871, China}
\email{yixie@pku.edu.cn}
\author{Boyu Zhang}
\address{Department of Mathematics, The University of Maryland at College Park, Maryland 20742, USA}
\email{bzh@umd.edu}

\title{The Dax isomorphism for topological $4$-manifolds}

\begin{document}

\maketitle 

\begin{abstract}
Let $X$ be an oriented topological $4$-manifold with boundary. We establish a Dax isomorphism for the fundamental group of the space of embedded arcs in $X$. Here, the embedding space can be either the simplicial set of locally flat embeddings or the space of topological embeddings endowed with the compact-open topology. When $X$ is smooth, we show that the space of smooth arcs and topological arcs have canonically isomorphic fundamental groups, and the topological Dax isomorphism agrees with the smooth Dax isomorphism. As a consequence, the homomorphisms that arise in the smooth Dax isomorphism do not depend on the smooth structure. 
\end{abstract}

\section{Introduction}

Dax \cite{Dax1972} studied the relations between the homotopy groups of embedding spaces and immersion spaces of smooth manifolds. For embeddings of arcs in $4$-manifolds, Gabai \cite{Gabai2021} gave a concrete formulation of the Dax isomorphism, which is an exact sequence that describes the fundamental group of the space of smoothly embedded arcs in a given $4$-manifold. Further properties of the Dax isomorphism were developed by Schwartz \cite{Schwartz2021}, Kosanovi\'c \cite{Kosanovic2024Dax}, and Kosanovi\'c--Teichner \cite{KosanovicTeichner2024Space}. The Dax isomorphism and its associated isotopy invariants have since become useful tools in the study of isotopy problems in dimension $4$; see, for example, \cite{Gabai2021,Schwartz2021,KosanovicTeichner2024Space,KosanovicTeichner2024,ChaKim2025,LinWuXieZhang2026}. 

The proof of the Dax isomorphism depends crucially on several family-transversality results in smooth manifolds. Therefore, the argument does not directly extend to the case of topological embeddings.
In this article, we prove a Dax isomorphism for the space of \emph{topologically} embedded arcs in a \emph{topological} $4$-manifold. This allows potential applications of the Dax isomorphism to isotopy problems in topological $4$-manifolds, where the fundamental group is not necessarily ``good'' in the sense of Freedman--Quinn.

Let $X$ be a connected oriented topological $4$-manifold with non-empty boundary. The manifold $X$ is not required to be compact.  Recall that every topological $3$-manifold has a canonical smooth structure up to diffeomorphisms. In the following, we will always fix a specific smooth structure on a neighborhood of $\partial X$, fix two points $p_0,p_1$ on $\partial X$, and all arcs considered are embeddings $f:[0,1]\to X$ such that $f(0)=p_0$, $f(1)=p_1$, and that $f$ has fixed smooth germs at the boundary points which are smoothly transverse to $\partial X$.

Let $I=[0,1]$, let $\Delta^n$ denote the $n$-dimensional simplex.

Let $\Emb^{lf}(I,X)$ denote the simplicial set whose simplices are family-locally-flat embeddings
\[
\Delta^n\times I \to \Delta^n \times X
\]
that preserve the $\Delta^n$-coordinate and satisfy the above boundary conditions.

Let $\Emb^{top}(I,X)$ denote the set of topological embeddings of $I$ in $X$, with the same boundary conditions, endowed with the compact-open topology. Since $I$ is compact, if $X$ is endowed with a compatible metric, then the compact-open topology is the same as the $C^0$-topology.

If $X$ admits a smooth structure and if the smooth structure near $\partial X$ is taken compatibly, let $\Emb^{sm}(I,X)$ denote the space of smooth embeddings of $I$ in $X$, with the same boundary conditions, endowed with the $C^\infty$ topology. Alternatively, we may define $\Emb^{sm}(I,X)$ as the simplicial set whose simplices are given by smooth embeddings 
\[
\Delta^n\times I\to \Delta^n\times X
\]
that preserve the $\Delta^n$ coordinate and agree with the aforementioned boundary conditions. The two spaces are weakly homotopy equivalent, and as far as this paper is concerned, the two definitions can be used interchangeably.

Let  $|\Emb^{lf}(I,X)|$ denote the geometric realization of $\Emb^{lf}(I,X)$.
There is a canonical map from $|\Emb^{lf}(I,X)|$ to $\Emb^{top}(I,X)$. The main result of this paper is the following:

\begin{Theorem}
\label{thm_main}
\begin{enumerate}
\item The map $|\Emb^{lf}(I,X)|\to Emb^{top}(I,X)$ induces isomorphisms on both $\pi_0$ and $\pi_1$. 
\item There exists an exact sequence
\begin{equation}
\label{eqn_top_Dax}
\pi_3(X)\to \mathbb{Z}[\pi_1(X)\setminus\{1\}]\to \pi_1 \Emb^{top}(I,X)\to \pi_2(X)\to 0.
\end{equation}
\item 
The exact sequence \eqref{eqn_top_Dax} is natural with respect to inclusions of $4$-manifolds. More precisely, if $X$ is a codimension-zero submanifold of $X'$ such that the boundary points $p_0$ and $p_1$ are in the interior of $\partial X\cap \partial X'$, then we have the following commutative diagram, where the vertical maps are induced by the inclusion:
\[
\begin{tikzcd}
  \pi_3(X) \arrow[r] \arrow[d] & 
  \mathbb{Z}[\pi_1(X)\setminus\{1\}] \arrow[r] \arrow[d] & 
  \pi_1 \Emb^{top}(I,X) \arrow[r] \arrow[d] & 
  \pi_2(X) \arrow[r] \arrow[d] & 
  0 \arrow[d] \\
  \pi_3(X') \arrow[r] & 
  \mathbb{Z}[\pi_1(X')\setminus\{1\}] \arrow[r] & 
  \pi_1 \Emb^{top}(I,X')  \arrow[r] & 
  \pi_2(X')  \arrow[r] & 
  0.
\end{tikzcd}
\]

\item When $X$ is smooth, the canonical embedding from $\Emb^{sm}(I,X)$ to $\Emb^{top}(I,X)$ induces isomorphisms on $\pi_0$ and $\pi_1$, and \eqref{eqn_top_Dax} agrees with the classical Dax isomorphism for smooth $4$-manifolds.
\end{enumerate}
\end{Theorem}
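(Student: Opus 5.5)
The plan is to reduce everything to the smooth case by working locally, using that an open subset of a topological $4$-manifold minus a point is smoothable, together with the topological immersion theory (Lees/Kirby--Siebenmann) and the Freedman--Quinn results on smoothing in dimension $4$ away from points. The exact sequence comes from comparing embeddings with immersions, as in Dax and Gabai.

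\emph{Step 1 (part (1)).} We first show that $|\Emb^{lf}(I,X)|\to \Emb^{top}(I,X)$ is bijective on $\pi_0$ and $\pi_1$. Two things go into this. First, every topological arc and every loop or path of topological arcs can be approximated by locally flat ones. In dimension $4$ a topological arc may be wild, but a $C^0$-small perturbation rel endpoints makes it tame: general position of a $1$-dimensional object in a $4$-manifold is available topologically by Quinn's and Freedman--Quinn's controlled topology. Second, sufficiently $C^0$-close locally flat arcs (and $1$-parameter families of them) are isotopic. This is a local contractibility statement, an Edwards--Kirby type result for $1$-parameter families of arcs in codimension $3$. We need it for simplices of dimension at most $2$, and that is what limits the statement to $\pi_0$ and $\pi_1$.

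\emph{Step 2 (parts (2) and (3)).} On $\Emb^{lf}$ we build the Dax sequence by the usual scheme. There is a fibration-like comparison $\Emb\to\Imm$, and $\pi_1\Imm^{lf}(I,X)\cong\pi_2(X)\oplus(\text{framing terms})$ via the topological Smale--Hirsch theorem (Lees). The framing terms are killed by the fixed boundary germs, exactly as in Gabai's formulation. For the relative group we use the Dax invariant: a $2$-parameter family of arcs that is an embedding on the boundary becomes, after topological family transversality, a family with isolated double points, and the count of these gives $\mathbb{Z}[\pi_1(X)\setminus\{1\}]$. To obtain the transversality, we observe that the family sweeps out a $3$-dimensional image, which we push into a smoothable neighborhood. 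Concretely, we use that $X\setminus\{\text{point}\}$ is smoothable after a small isotopy of the family off a discrete set; for a compact $2$-parameter family of arcs the image misses a point. This lets us invoke the smooth family transversality there. The map from $\pi_3(X)$ and exactness are then transported from the smooth case. Naturality in part (3) is immediate, since all constructions are local and commute with inclusions.

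\emph{Step 3 (part (4)).} We compare smooth arcs with topological arcs. On $\pi_0$ and $\pi_1$ we have smooth approximation of locally flat families (Step 1 applied in the smooth category) and the $C^0$-close uniqueness from Step 1, which give the isomorphism. Agreement of the sequences follows because the maps were defined via smooth structures on complements of points, and the smooth Dax maps are natural under inclusion of $X$ minus a point, using part (3).

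I expect the main obstacle to be the topological family transversality and the uniqueness of $C^0$-close $1$-parameter families. The smoothing trick requires showing that the Dax invariant is independent of the chosen point and smoothing; that independence is what gives the final claim that the homomorphisms do not depend on the smooth structure.
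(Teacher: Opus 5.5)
\textbf{Main gap: independence from the smoothing.} You flag that the Dax sequence must not depend on the auxiliary smoothing of $X$ minus a point, but you never prove it, and parts (3) and (4) rest entirely on it.

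\begin{itemize}
\item In Step~3 you deduce agreement with the classical sequence ``using part (3)''. Part (3) was justified by ``all constructions are local'', and that is not true.
\item The map $\pi_3(X)\to\bZ[\pi_1(X)\setminus\{1\}]$ and the exactness come from global family transversality in a smoothing of $X\setminus\{q\}$. Freedman--Quinn guarantee such a smoothing exists, not that it is unique.
\item For $X\subset X'$, the smoothings chosen on $X\setminus\{q\}$ and $X'\setminus\{q'\}$ need not be compatible. When $X$ is smooth, the smoothing used to define \eqref{eqn_top_Dax} need not be the restriction of the given one.
\end{itemize}

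So the sequence in (2) depends on a choice, and neither (3) nor (4) follows. You need a proof that Gabai's homomorphisms are unchanged under a change of smooth structure; this is the genuinely new content of the theorem.

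The paper obtains it by recasting the Dax sequence through embedding calculus:
\begin{itemize}
\item Sinha's map $\Emb^{sm}(I,X)\to\Map_\Delta(\Delta^2,C_2'\langle X,\partial\rangle)$ is a $\pi_1$-isomorphism, and the resulting Bousfield--Kan exact sequence reproduces Gabai's maps.
\item This sequence is compared with the one for a purely topological cosimplicial space $C_n^\tau(X)$. Its points are configurations in $\widetilde X$ in which no two points differ by a non-trivial deck transformation.
\item The comparison map is the identity on the outer terms, so the five lemma applies.
\item $\Emb^{top}(I,X)$ itself maps to $\Map_\Delta(\Delta^2,C_2^\tau(X))$ by path lifting, so the whole sequence becomes intrinsic to the topological manifold.
\end{itemize}
Your proposal has nothing playing this role.

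\textbf{Step~1 also fails as written.}
\begin{itemize}
\item In $\Emb^{top}(I,X)$ the centre of a small neighbourhood can be a wild arc, where Edwards--Kirby says nothing. Nothing controls the modulus of local contractibility at locally flat approximants as they converge to a wild arc.
\item More seriously, injectivity on $\pi_1$ requires filling small loops of arcs, and that is not a local-contractibility statement. A generic $2$-parameter family of arcs in a $4$-manifold has isolated double points ($2+1+1=4$), and these must be removed. The paper first smooths, then in Lemma~\ref{lem_local_extension_sm} confines each double point to a tiny $4$-ball and uses $\pi_1\Emb^{sm}(I,D^4)=0$.
\item The claim that a compact $2$-parameter family of arcs misses a point is false. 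Drag a point of an arc around a space-filling loop by an ambient isotopy: this gives a family-locally-flat loop of arcs whose sweep is an entire ball.
\end{itemize}

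The paper avoids these problems as follows. It fixes $q\in\partial X$ and pushes all of $X$ into $X\setminus\{q\}$ by an isotopy fixing a neighbourhood of $f_0$. This gives $\pi_1$-isomorphisms for both $\Emb^{lf}$ and $\Emb^{top}$. It then smooths $X\setminus\{q\}$ with $f_0$ smooth, and proves part (1) there by smooth approximation and transversality (Section~\ref{sec_smooth_isomorphism}).
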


As an immediate consequence, the classical Dax isomorphism does not depend on the smooth structure of $X$. This answers a question asked by Jae Choon Cha. Theorem \ref{thm_main} also provides an alternative approach for the definition of the topological Dax invariant by Cha--Kim \cite[Theorem F]{ChaKim2025}.

\begin{remark}
The base points in \eqref{eqn_top_Dax} are defined in the same way as the classical Dax isomorphism for smooth manifolds: Choose a $f_0\in \Emb^{top}(I,X)$ as the base point for $\pi_1\Emb^{top}(I,X)$. The base point for the homotopy groups of $X$ is then taken to be an arbitrary point on the image of $f_0$. 
\end{remark}

\begin{remark}
    If $X$ is smooth, then the fact that the maps 
    \begin{equation}
    \label{eqn_sm_to_lf}
    \Emb^{sm}(I,X)\to \Emb^{lf}(I,X)
    \end{equation}
    and 
    \begin{equation}
    \label{eqn_sm_to_top}
    \Emb^{sm}(I,X) \to Emb^{top}(I,X)
    \end{equation}
    both induce isomorphisms on $\pi_0$ and $\pi_1$ are, to some extent, already known in the literature.\footnote{Here, we view the space $\Emb^{sm}(I,X)$ as a simplicial set in \eqref{eqn_sm_to_lf}  and as the set of smooth embeddings in $C^\infty$-topology in \eqref{eqn_sm_to_top}. Recall that the two spaces are weakly homotopy equivalent, and we therefore use them interchangeably in this paper.} The statement that the map \eqref{eqn_sm_to_top} induces isomorphisms on $\pi_0$ and $\pi_1$ essentially follows from the work of Robinson  \cite{robinson1971homotopy, Robinson}. Although Robinson only stated the results for embedding spaces of \emph{closed} manifolds, the arguments there also apply to manifolds with boundary. The fact that \eqref{eqn_sm_to_lf} induces isomorphisms on $\pi_0$ and $\pi_1$ will follow as a consequence of an upcoming work by Kremer--Kupers \cite{Kupers}. 
    
    Nevertheless, in Section \ref{sec_smooth_isomorphism}, we will present self-contained proofs of these results. There are two reasons for doing this: First, although the main ideas already exist in the literature, there do not appear to be explicitly given statements that directly yield the results needed here. Second, the works of Robinson and Kremer--Kupers study embedded spaces in more general dimensions. But even the special case of embeddings of $1$-manifolds into $4$-manifolds is very useful. For example, it allows one to generalize many recent applications of Dax invariants \cite{Gabai2021,Schwartz2021,KosanovicTeichner2024Space,KosanovicTeichner2024,ChaKim2025,LinWuXieZhang2026} to the topological category. Our proof focuses on this case, where the argument can be made significantly simpler and more geometric.  
\end{remark}

\begin{remark}
    The fact that $|\Emb^{lf}(I,X)|\to Emb^{top}(I,X)$ induces isomorphisms on both $\pi_0$ and $\pi_1$ depends heavily on the particular dimensions of the setup of the problem. For example, if we consider the embedding spaces of $S^1$ in $S^3$, then the trefoil and the unknot are in the same connected component of $\Emb^{top}(S^1,S^3)$ but not in the same connected component of $\Emb^{lf}(S^1,S^3)$.
\end{remark}

\begin{remark}
    The maps \eqref{eqn_sm_to_lf} and \eqref{eqn_sm_to_top} do not induce isomorphisms on $\pi_2$. In fact, by embedding calculus, it is known that $\pi_2\Emb^{sm}(I,D^4)\cong \bZ$ (see \cite[Proposition 3.9]{budney2008family}), but we have $\pi_2\Emb^{lf}(I,D^4)=0$ (see Lemma \ref{lem_contractible_Emb_lf_D4} below). Therefore, both \eqref{eqn_sm_to_lf} and \eqref{eqn_sm_to_top} induce non-injective maps on $\pi_2$. An explicit geometric description of the generator of $\pi_2\Emb^{sm}(I,D^4)$ is given in \cite{budney2008family} after the statement of Corollary 3.10.
\end{remark}

\textbf{Acknowledgments:} 
We would like to express our sincere gratitude to Jae Choon Cha for tremendously helpful discussions throughout the development of this paper. We thank Alexander Kupers for earlier correspondence, which allowed us to learn about their upcoming related work.

J. Lin is partially supported by  National Key R\&D Program of China 2025YFA1017500 and NSFC 12271281. 
Y. Xie is partially supported by NSFC 12341105.
B. Zhang is partially supported by NSF grants DMS-2540516, DMS-2405271, and a travel grant from the Simons Foundation. \\

\textbf{AI Statement:} During the preparation of this work, we used AI tools to translate \cite{cerf1961topologie} from French to English for easier reading. Some of the techniques we learned there were used in the proof of Proposition \ref{prop_sm_to_top_iso}, where attribution is included. Apart from this, no AI tools were used for this work.

\section{Isomorphisms on $\pi_0\Emb(I,X)$ and $\pi_1\Emb(I,X)$ when $X$ is smooth}
\label{sec_smooth_isomorphism}

In this section, we assume that $X$ is smooth. We also assume that $X$ is endowed with a smooth Riemannian metric, which induces a metric space structure on $X$. 

All isotopies and homotopies are assumed to fix a neighborhood of the boundary. 
For $\epsilon>0$, we say that an isotopy (resp. homotopy) is an \emph{$\epsilon$-isotopy} (resp. \emph{$\epsilon$-homotopy}), if the trajectory of each point has diameter less than $\epsilon$. In this case, we also say that the \emph{diameter} of the isotopy (resp. homotopy) is less than $\epsilon$.

We will prove the following result.
\begin{Proposition}
\label{prop_sm_to_lf_surj}
  The map  $\Emb^{sm}(I,X)\to \Emb^{lf}(I,X)$ induces a surjection on $\pi_0$, and induces a surjection on $\pi_1$ for an arbitrary choice of base point in $\Emb^{sm}(I,X)$.
\end{Proposition}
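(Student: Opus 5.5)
The plan is to smooth the locally flat families directly: first a single arc (for $\pi_0$), then a one-parameter family rel endpoints (for $\pi_1$). Both steps run on the same mechanism. Locally flat embeddings of $1$-manifolds in $4$-manifolds are controlled, because codimension $\ge 3$ locally flat embeddings are locally tame. So they can be $C^0$-approximated by smooth ones, with the approximation realized by a small ambient isotopy. Near the boundary nothing needs to be done, since the germs are already smooth and fixed.

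\textbf{Step 1: $\pi_0$.} Let $f\in\Emb^{lf}(I,X)$. Cover $f(I)$ by finitely many locally flat charts $U_i\cong\mathbb{R}^4$ in which $f$ is a standard line segment. Working chart by chart, and using a partition of $I$ into subintervals, I will perform an isotopy supported near each chart that moves $f$ to be smooth on the corresponding subinterval. This uses the fact that inside a chart the standard segment is smooth for a homeomorphic coordinate. The delicate point is that chart homeomorphisms are not smooth. To handle this, apply the $0$-dimensional case first: make $f$ smooth near the finitely many break points. Then a relative version, using the Alexander trick for arcs in $D^4$ rel ends, straightens each remaining piece inside a small ball to a smooth arc. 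This gives a path in $\Emb^{lf}$ (a $1$-simplex, after reparametrizing) from $f$ to a smooth arc.

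\textbf{Step 2: $\pi_1$.} Let $F:\Delta^1\times I\to\Delta^1\times X$ be a family-locally-flat loop based at a smooth arc $f_0$. I would prove a parametric version of Step 1. Near each $(t,s)$ the family admits a family chart $\Delta^1\times\mathbb{R}\to\Delta^1\times\mathbb{R}^4$ that is standard. Using compactness of $\Delta^1\times I$, choose a grid fine enough that each grid cell maps into a single family chart. Then homotope the family, through $2$-simplices of $\Emb^{lf}$ and relative to the already smooth ends $t=0,1$, so that it becomes a smooth family. The induction runs over the skeleta of the grid, and each extension step is an instance of the family Alexander trick in $\Delta^k\times D^4$. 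This trick works for locally flat families of arcs because arcs in $D^4$ rel boundary form a contractible locally flat space; the earlier lemma on $\Emb^{lf}(I,D^4)$ supplies the needed contractibility. After this, the loop is homotopic to a family that is smooth on each cell. A final smoothing of corners in the $t$-direction, using the smooth-family Alexander trick near the grid lines, yields an element of $\pi_1\Emb^{sm}$.

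\textbf{Main obstacle.} The hardest part is the local straightening step: showing that within a family chart, a locally flat family of arcs in a ball, which is already smooth near the boundary of the cell, can be deformed rel boundary into a smooth family. I will handle this via the contractibility of the relevant relative locally flat embedding space of arcs in $D^4$, proved by a coning/Alexander trick argument. Since all I need is a homotopy rel boundary in $\Emb^{lf}$ whose endpoint is smooth, it suffices to cone everything to a standard smooth arc. The smooth boundary data must be kept fixed during the coning, which requires the coning to be done in a collar compatible with the smooth structure.
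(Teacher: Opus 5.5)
\textbf{Verdict: there is a genuine gap.} It sits exactly where the difficulty of the statement lies.

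\textbf{The cut points are never smooth.} Your chart homeomorphisms $U_i\cong\mathbb{R}^4$ are not smooth. So once you cut $f$ at break points, each piece has end germs that are straight in chart coordinates but not smooth in $X$; indeed $f$ need not be smooth anywhere away from its fixed germs at $p_0,p_1$. You propose to first make $f$ smooth near the break points and call this ``the $0$-dimensional case''. It is not a $0$-dimensional problem: moving a locally flat arc by a small ambient isotopy so that it becomes smooth near even one point is already the local form of the statement you want.

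Your opening sentence, that locally flat arcs can be $C^0$-approximated by smooth ones via small ambient isotopies, is precisely the deep input the paper imports from Quinn in the proof of Lemma~\ref{lem_perturb_lf_to_sm}. If you simply cite it, Step 1 is immediate, as in the paper. But the chart-by-chart mechanism you describe does not prove it. Local tameness alone does not imply it either, since tameness only compares $f$ with a straight arc in a \emph{topological} chart, never with arcs smooth for the given smooth structure.

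Step 2 would need a parametric version of this at every grid vertex and along every segment $[t_i,t_{i+1}]\times\{s_j\}$, where the cut point $\sigma_t(s_j)$ moves with $t$. So your cell problems are not of the form covered by Lemma~\ref{lem_contractible_Emb_lf_D4}, which concerns arcs in a fixed ball with fixed smooth germs on its boundary.

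\textbf{Coning does not close the gap.} Radially coning a locally flat arc $A\subset D^4$ (or a family of them) to a standard arc gives a path in $\Emb^{top}$. Near the cone point, however, the family is a rescaled copy of $(D^4,A)$. The natural way to see it is family-locally-flat is to write $A=h(I_0)$ with $h\in\Homeo^+(D^4)$ and cone $h$ instead. Producing $h$ requires the connectivity of $\Emb^{lf}(I,D^4)$ plus isotopy extension, and in the paper that connectivity again rests on Lemma~\ref{lem_perturb_lf_to_sm}. So the Alexander trick yields contractibility once connectivity is known, but not connectivity itself.

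\textbf{How to repair it.} Never cut the arc. All arcs share the same smooth germs at $p_0,p_1\in\partial X$, so a tubular neighborhood $U_t\cong D^4$ of the \emph{whole} arc $\sigma_t$ is a ball with the right boundary data. This is what the paper does:
\begin{itemize}
\item For $\pi_0$, it combines Lemma~\ref{lem_sm_emb_dense} with Lemma~\ref{lem_perturb_lf_to_sm}.
\item For $\pi_1$, it subdivides only the $t$-interval and takes a fiberwise-smooth embedded family $\sigma'$ that is $C^0$-close to $\sigma$.
\item It then joins $\sigma_{t_i}$ to $\sigma'_{t_i}$ by the small ambient isotopies $\tau_i$ of Lemma~\ref{lem_perturb_lf_to_sm}.
\item Finally, it contracts each square $\xi_i\,\tau_{i+1}\,\overline{\xi_i'}\,\overline{\tau_i}$ inside $U_{t_i}$ using Lemma~\ref{lem_contractible_Emb_lf_D4}.
\end{itemize}
The smallness of the $\tau_i$ is what lets adjacent squares share an edge lying inside both balls.
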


As preparation for Proposition \ref{prop_sm_to_lf_surj}, we establish the following lemmas. 
Let $\Map^{sm}(I,X)$ be the set of smooth maps from $I$ to $X$ with the same boundary condition. 
\begin{Lemma}\label{lem_sm_jet-transversality} Then the inclusion map $\Emb^{sm}(I,X)\hookrightarrow \Map^{sm}(I,X)$ has a dense image. Furthermore, every path 
\[
\gamma:I\to \Map^{sm}(I,X)
\]
with $\gamma(0),\gamma(1)\in  \Emb^{sm}(I,X)$ can be $C^0$-perturbed without moving the end points to a path in $\Emb^{sm}(I,X)$.  
\end{Lemma}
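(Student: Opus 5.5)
The plan is to use the multi-jet transversality theorem (Thom--Mather) in dimension $4$. The boundary germs are fixed, so I will work in the space of smooth maps with prescribed germs near $\partial I$. In that space, all perturbations are supported in a compact subinterval $[\delta,1-\delta]$, and $\delta$ is chosen so that the fixed germs are embeddings on $[0,\delta']\cup[1-\delta',1]$ for some $\delta'>\delta$.

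\textbf{Density.} A map $f\colon I\to X$ fails to be an embedding in one of two ways. Either $df(t)=0$ for some $t$, or $f(s)=f(t)$ for some $s\ne t$.
\begin{itemize}
\item The first condition is the preimage of the zero section in $J^1(I,X)$. It has codimension $4>1=\dim I$.
\item The second condition is the preimage of the diagonal $\Delta_X$ under $(f\times f)$ on $I^{(2)}=\{(s,t): s\neq t\}$. It has codimension $4>2$.
\end{itemize}
By jet and multijet transversality, the set of $f$ whose jet extensions miss these strata is residual in the $C^\infty$ topology. Since each stratum has codimension larger than the dimension of the source, transversality means the jets miss the strata entirely. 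So a generic $f$ is an injective immersion of a compact interval, hence an embedding. Near the boundary there is no issue: the germs are already embeddings there, and a pair $(s,t)$ with $s$ near $0$ and $t$ in the interior can be handled by the same multijet argument, since only $t$ needs to move. This gives density in $C^\infty$, and a fortiori in $C^0$.

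\textbf{Paths.} I will run the parametrized version. Regard $\gamma$ as a smooth map $\Gamma\colon I\times I\to X$, $(u,t)\mapsto \gamma(u)(t)$. If $\gamma$ is only continuous in $u$, I first approximate it by a smooth family in $C^0$, keeping the endpoints; the embedding condition is open, so the endpoints can be restored by a short straight-line homotopy in a chart. Now the failure loci are:
\begin{itemize}
\item $\{(u,t): \partial_t\Gamma=0\}$, of codimension $4$ in a $2$-dimensional parameter space;
\item $\{(u,s,t): s\ne t,\ \Gamma(u,s)=\Gamma(u,t)\}$, of codimension $4$ in a $3$-dimensional space.
\end{itemize}
By transversality for families, a generic perturbation of $\Gamma$ relative to $\{0,1\}\times I$ and to a neighborhood of $I\times\partial I$ makes both sets empty. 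Concretely, I will use the parametric transversality theorem: take a finite-dimensional family of perturbations $\Gamma+\sum_i a_i\,\rho(u)\phi_i(t)v_i$ in charts, where $\rho$ vanishes at $u=0,1$, and apply Sard to the evaluation map. The perturbation can be chosen $C^0$-small, indeed $C^\infty$-small.

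\textbf{Main obstacle.} The delicate point is keeping the endpoints fixed. Because $\rho(0)=\rho(1)=0$, the perturbation family is not submersive at $u\in\{0,1\}$. There, however, $\gamma(0)$ and $\gamma(1)$ are already embeddings. Since the embedding condition is open and $I$ is compact, there is an $\eta>0$ such that $\Gamma(u,\cdot)$ is an embedding for $u\in[0,\eta]\cup[1-\eta,1]$, and this persists under small perturbations. So it suffices to achieve transversality over $u\in[\eta,1-\eta]$, where $\rho>0$ and the family is submersive. The same reasoning handles the fixed boundary germs in $t$: the set of $(s,t)$ with both coordinates near $\partial I$ is already free of double points, because $p_0\ne p_1$ and the germs are embeddings.
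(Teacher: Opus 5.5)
Your proposal is correct and takes the same route as the paper: the paper's proof is a one-line appeal to standard jet and multijet transversality, using that $\dim X=4$ exceeds the relevant source dimensions ($1,2$ for single arcs and $2,3$ for one-parameter families), and you have spelled this out, including the fixed-germ and fixed-endpoint bookkeeping. One point to fix if you write out the concrete Sard step: a finite family of bump perturbations is not submersive for the double-point map near the diagonal $s=t$. So you should first make the family consist of immersions. Then use compactness to get a uniform $\epsilon_0>0$ such that every $C^1$-small perturbation of each $\Gamma(u,\cdot)$ is injective on intervals of length $\epsilon_0$. Finally, apply Sard only on the compact region $|s-t|\ge\epsilon_0$, using bumps whose supports have length less than $\epsilon_0$; the multijet theorem you cite handles exactly this internally.
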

\begin{proof}
Since the dimension of $X$ is greater than twice the dimension of $I$, this follows from standard jet-transversality arguments.    
\end{proof}

\begin{Lemma}
\label{lem_sm_emb_dense}
    The set $\Emb^{sm}(I,X)$ is dense in $\Emb^{top}(I,X)$ in the $C^0$-topology.
\end{Lemma}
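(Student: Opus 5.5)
Given $f\in\Emb^{top}(I,X)$ and $\epsilon>0$, I will produce a smooth embedding $g$ with the prescribed boundary germs and $\dist(f(t),g(t))<\epsilon$ for all $t$. The plan has two steps: first approximate $f$ by a smooth \emph{map} with the correct boundary behavior, then invoke Lemma \ref{lem_sm_jet-transversality} to perturb that map to a smooth embedding.

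\textbf{Step 1 (smoothing away from the ends).} Since $f$ agrees with the fixed smooth germs near $0$ and $1$, there is $\delta>0$ such that $f|_{[0,\delta]}$ and $f|_{[1-\delta,1]}$ are already smooth embeddings. Apply the standard Whitney approximation theorem to $f$ relative to these closed sets. Concretely, cover $f(I)$ by finitely many coordinate charts, approximate $f$ chartwise by convolution, and glue with a partition of unity on $I$ that equals $1$ on a neighborhood of the ends for the "keep $f$" term. This gives a smooth map $h:I\to X$ with $h=f$ near $\{0,1\}$ and $\dist(h(t),f(t))<\epsilon/2$. Because $X$ need not be compact, we only use that $f(I)$ is compact, so finitely many charts suffice. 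In particular $h\in\Map^{sm}(I,X)$.

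\textbf{Step 2 (making it an embedding).} By the density statement in Lemma \ref{lem_sm_jet-transversality}, $\Emb^{sm}(I,X)$ is dense in $\Map^{sm}(I,X)$. The density is with respect to the $C^\infty$ (hence also $C^0$) topology, and the perturbation preserves the boundary conditions. So there is $g\in\Emb^{sm}(I,X)$ with $\dist(g(t),h(t))<\epsilon/2$. The triangle inequality then gives $\dist(g,f)<\epsilon$ in the sup metric. Since $I$ is compact, the sup metric induces the compact-open topology on $\Emb^{top}(I,X)$, and this proves density.

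\textbf{Main obstacle.} The only delicate point is the boundary condition in Step 1. The approximation must agree with $f$ on a neighborhood of the endpoints, so that the smooth germs transverse to $\partial X$ are preserved and $h$ lies in $\Map^{sm}(I,X)$ rather than merely being smooth in the interior. This is handled by the relative version of Whitney approximation, using a cutoff supported away from $[0,\delta/2]\cup[1-\delta/2,1]$. Near the ends, $f$ is already smooth, so the glued map remains smooth and $C^0$-close. Everything else is routine. Note that the embedding property of $f$ itself is never used beyond the boundary germs: the dimension count $4>2\cdot 1$ in Lemma \ref{lem_sm_jet-transversality} restores injectivity and immersivity.
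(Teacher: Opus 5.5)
Your proposal is correct and follows the same route as the paper: first approximate $f$ in $C^0$ by a smooth map in $\Map^{sm}(I,X)$ via standard (relative) smoothing, then use the density statement of Lemma \ref{lem_sm_jet-transversality} to pass to a smooth embedding and finish with the triangle inequality. You simply spell out the boundary-germ bookkeeping and the compactness of $f(I)$ that the paper leaves implicit in ``standard smoothing arguments.''
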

\begin{proof}
By standard smoothing arguments, the space $\Map^{sm}(I,X)$ is dense in $\Emb^{top}(I,X)$. By Lemma \ref{lem_sm_jet-transversality}, the space $\Emb^{sm}(I,X)$ is dense in $\Map^{sm}(I,X)$.
\end{proof}

\begin{Lemma}
\label{lem_perturb_lf_to_sm}
    For every $f\in \Emb^{lf}(I,X)$ and $\epsilon>0$, there exists $\delta>0$, such that for every $g$ in the set $\{g\in \Emb^{sm}(I,X)\mid \dist_{C^0}(f,g)\}<\delta$, the arc $g$ is ambiently topologically $\epsilon$-isotopic to $f$. 
    
    Moreover, the value of $\delta$ can be taken to depend continuously on $\epsilon$ and $f$ (with respect to the $C^0$ topology).
\end{Lemma}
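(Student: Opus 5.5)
We will use the locally flat structure of $f$ to reduce to a model situation near a straight segment, and then do a Cerf/Alexander-trick style argument in a tubular neighborhood. Since $f$ is a locally flat arc, its image has a neighborhood $N$ homeomorphic to $I\times D^3$, with $f$ corresponding to $I\times\{0\}$ and the homeomorphism smooth near the two endpoints, compatible with the fixed germs. (For a $1$-dimensional locally flat submanifold of a $4$-manifold this follows from local flatness and the existence of topological normal bundles, or directly by patching local charts along the compact arc.) The proof then has two main steps.

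\textbf{Step 1: arrange $g$ to be unknotted in $N$.} Cover $f(I)$ by finitely many locally flat charts $U_1,\dots,U_k$ of diameter less than $\epsilon/10$, each a ball in which $f$ is a straight segment. Choose $\delta$ smaller than a Lebesgue number of this cover. Then $g$ lies in $N$, and $g$ splits into subarcs $g_i\subset U_i$ that are $C^0$-close to the straight segments $f_i$.

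The main obstacle is that, in a topological chart, $g_i$ need not look straight: $g$ is smooth in the original smooth structure, not in the chart. However, every arc in a $4$-ball is unknotted, and this is the crucial dimensional input. An arc in the interior of $D^4$ with endpoints on fixed small spheres is ambiently isotopic rel boundary to a straight one. The isotopy can be confined to a small neighborhood of the straight-line homotopy, so its diameter is controlled. We would prove this by radial shrinking, i.e.\ the Alexander trick: conjugating by dilations pulls $g_i$ inside a tiny ball around a point of $f_i$. This is a topological isotopy of diameter at most about the chart size. It needs only that $g_i$ be tame in the chart, which holds because $g_i$ is smooth, hence locally flat, and local flatness is preserved by homeomorphisms. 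We then apply the unknotting of locally flat arcs in $D^4$, which follows from the fact that $1$-knots in dimension $4$ are trivial via a local straightening.

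\textbf{Step 2: assemble.} Proceed chart by chart with overlaps handled by working successively in $U_1$, then $U_2$ fixing what is already straight, and so on. Each local isotopy has support in $U_i$, so each point moves within sets of diameter less than $\epsilon/10$. Since each point lies in at most a bounded number of the $U_i$ (after choosing the cover with multiplicity $\le 2$ along the arc), the total trajectory has diameter less than $\epsilon$.

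\textbf{Continuity of $\delta$.} The quantities used above are the chart sizes and a Lebesgue number. These vary continuously when $f$ is perturbed in $C^0$ within a neighborhood where the same charts still work. So we may take $\delta(\epsilon,f)$ to be, say, half the supremum of admissible values, which is lower semicontinuous and positive. A continuous positive minorant then exists by a partition-of-unity argument on the metrizable space $(0,\infty)\times\Emb^{lf}$.
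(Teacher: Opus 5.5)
There is a genuine gap at the heart of your Step~1. In a topological chart $\phi_i$ of $U_i$ in which $f$ is a straight segment, the arc $\phi_i\circ g_i$ is only locally flat, because a topological chart does not preserve smoothness of $g$. So passing to the chart leaves you with exactly the original problem, with the roles of $f$ and $g$ exchanged: you must show that a locally flat arc $C^0$-close to a straight (hence smooth) arc is carried onto it by a small ambient isotopy.

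You dispose of this with ``unknotting of locally flat arcs in $D^4$ via a local straightening'' and the Alexander trick. Neither does the job.
\begin{itemize}
\item Radially shrinking $g_i$ produces a path in $\Emb^{top}$ whose knotted part collapses to a point at the end. This is not an ambient isotopy. It is precisely the mechanism that connects the trefoil to the unknot in $\Emb^{top}(S^1,S^3)$ (see the remark in the introduction).
\item The Alexander trick yields an ambient isotopy only if you already know that $g_i=h\circ f_i$ for a homeomorphism $h$ of the ball fixing the boundary. That is, it presupposes unknottedness.
\item For smooth arcs, unknottedness follows from general position. For locally flat arcs in a $4$-manifold it requires a genuinely $4$-dimensional taming/smoothing theorem, and your argument supplies none.
\end{itemize}

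The paper inserts exactly this input at the start. By Quinn \cite[Theorem 2.2.2]{quinn1982ends}, $f$ can be moved by an ambient $\delta$-isotopy to a smooth arc $f'$. After that everything is smooth:
\begin{itemize}
\item $\dist_{C^0}(g,f')<2\delta$, so the minimal-geodesic homotopy from $g$ to $f'$ has diameter less than $2\delta$.
\item Jet transversality (Lemma~\ref{lem_sm_jet-transversality}) perturbs this homotopy to a smooth isotopy.
\item Smooth isotopy extension turns it into an ambient $3\delta$-isotopy, so $\delta<\epsilon/4$ suffices.
\end{itemize}
This also makes the continuity of $\delta$ transparent: $\delta$ only has to lie below a geodesic-convexity radius near $\Ima(f)$. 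Your continuity argument, by contrast, relies on charts that straighten $f$, and these do not straighten a $C^0$-nearby $\tilde f$.

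Even granting the local unknotting, Step~2 needs two repairs.
\begin{itemize}
\item $g$ may enter $U_i$ along several strands, so each local isotopy must be relative to the other strands of $g$ in $U_i$.
\item A multiplicity bound on the cover does not bound trajectories. Successive isotopies can push a point from $U_1\cap U_2$ into $U_2\cap U_3$, and so on. Treating the odd-indexed charts simultaneously and then the even-indexed ones would fix this.
\end{itemize}
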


\begin{proof}
    By Quinn \cite[Theorem 2.2.2]{quinn1982ends}, for every $\delta$, the arc $f$ can be ambiently $\delta$-isotoped to a smooth arc $f'$. 
    Choose $\delta$ sufficiently small such that for every $x\in \Ima(f)$ and $x_{1},x_{2}\in X$ with $\dist(x,x_{i})<\delta$, there is a unique minimal geodesic connecting $x_1$ and $x_2$. The upper bound on such $\delta$ can be taken to depend continuously on $f$. Therefore, if $g\in \Emb^{sm}(I,X)$ satisfies $\dist_{C^0}(f,g)<\delta$, then the arc $g$ can be homotoped to $f'$ by the minimal geodesic homotopy, and the diameter of the homotopy is less than $2\delta$. By Lemma \ref{lem_sm_jet-transversality}, the minimal geodesic homotopy can be perturbed to a smooth isotopy of embedded smooth arcs, which then extends to a $3\delta$-ambient isotopy from $g$ to $f'$ by the isotopy extension theorem. As a result, we obtain a $4\delta$-ambient isotopy from $f$ to $g$. If we further require that $\delta<\epsilon/4$, then the desired result is proved. 
\end{proof}

\begin{Lemma}
\label{lem_contractible_Emb_lf_D4}
    $\Emb^{lf}(I,D^4)$ is contractible.
\end{Lemma}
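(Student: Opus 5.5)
We prove that the simplicial set $\Emb^{lf}(I,D^4)$ is contractible, meaning it is a Kan complex with vanishing homotopy groups. Equivalently, we show that every map $\partial\Delta^n\to\Emb^{lf}(I,D^4)$ extends over $\Delta^n$. The strategy is an Alexander-trick type coning argument in the radial direction of $D^4$. Fix the boundary points $p_0,p_1\in\partial D^4$ with their fixed smooth germs. Choose a standard straight arc $f_{st}$ joining them; it is smooth, hence locally flat.

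\textbf{Step 1: straighten every family near the boundary.} Given a family $F:\Delta^n\times I\to\Delta^n\times D^4$ (or one on $\partial\Delta^n$), all arcs share the same germ near $p_0,p_1$. So there is a collar region $C=\partial D^4\times[1-r,1]$ where, near the two boundary points, every arc agrees with $f_{st}$. By compactness of $\Delta^n$ we can choose one $r$ uniformly over the family.

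\textbf{Step 2: radially shrink the family.} Let $\phi_t$, $t\in[0,1]$, be a family of self-embeddings of $D^4$. We take $\phi_0=\mathrm{id}$, and let $\phi_t$ push a small subdisk $B_t$ toward the boundary region while fixing a neighborhood of $\partial D^4$ near $p_0,p_1$. The idea is Alexander's isotopy, adapted to relative boundary conditions. Concretely, identify $D^4$ with a half-ball $D^3\times[0,1]$ in which $p_0,p_1$ and the standard arc lie in a fixed slab, and arrange that the standard arc is a ``half-circle'' whose complement has a product structure. Then conjugate by the linear rescalings $x\mapsto tx$ centered at a boundary point, modifying the arc outside the rescaled region by the standard arc.

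For each arc $f$ and each $t\in(0,1]$, define $f_t$ to be the arc equal to $\lambda_t\circ f$ on a rescaled copy of $D^4$ and equal to the standard arc outside. Set $f_0=f_{st}$. This gives a deformation of any simplex family to the constant standard family. Local flatness is preserved fiberwise and family-wise, because rescaling is a homeomorphism and the gluing happens along smooth standard pieces. Continuity at $t=0$ holds in the simplicial sense once we check that the total map $\Delta^n\times[0,1]\times I\to\Delta^n\times[0,1]\times D^4$ is a family-locally-flat embedding near $t=0$.

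\textbf{Step 3: extend over simplices.} Given $\partial\Delta^n\to\Emb^{lf}(I,D^4)$, apply the deformation. This produces a map from the cone $\partial\Delta^n\times[0,1]/\partial\Delta^n\times\{0\}\cong\Delta^n$, which gives the extension. Before this step we triangulate appropriately, using that family-locally-flat embeddings over prisms restrict and subdivide.

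The main obstacle is family local flatness at $t=0$, where the arcs shrink to the boundary region. Near the collapse point the family looks like a cone on a locally flat family, and a cone on a family of locally flat arcs need not be locally flat. The first thing to try is to avoid collapsing to a point. Instead we push the ``knotted'' part of each arc into a thinner and thinner region along the standard arc, and we use a collar of the standard arc. In that collar, Step 1 already made the family standard, so the limit is approached through a tube $I\times D^3$ in which local flatness is automatic. If this fails, the fallback is to use Quinn's family isotopy extension and controlled results, as in the proof of Lemma~\ref{lem_perturb_lf_to_sm}, to produce ambient isotopies of $D^4$ realizing the shrinking, and then compose.
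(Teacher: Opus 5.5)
\textbf{There is a genuine gap.} It sits exactly at the point you flag and then leave open: family local flatness at $t=0$.

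Your rescaling deformation in Step~2 is a valid contraction in the compact-open topology; it is essentially the proof that $\Emb^{top}(I,D^4)$ is contractible. But a simplex of $\Emb^{lf}$ requires the total map over $\Delta^n\times[0,1]$ to be family-locally-flat. At $t=0$ the trace of the shrinking is a cone on the whole family, and showing that this cone is family-locally-flat is the entire content of the lemma.

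Nothing in Steps~1--3 uses that the ambient dimension is $4$. Run verbatim in $D^3$, the same construction would join a locally flat trefoil arc to the straight arc in $\Emb^{lf}(I,D^3)$. That is false, since isotopy extension would make the two complements homeomorphic (compare the paper's remark on the trefoil and unknot in $\Emb^{top}$ versus $\Emb^{lf}$). So some genuinely $4$-dimensional input is needed, and neither of your remedies supplies it. Squeezing the knotted part into a thin tube around the standard arc does not help. The arcs being standard near $p_0,p_1$ says nothing about the part being squeezed, and in the limit the family still has a cone-type singularity. The ``fallback'' via Quinn's controlled results is too vague to evaluate, and it omits the key ingredients listed below.

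\textbf{What works, and what the paper does.} The argument has three parts.
\begin{enumerate}
\item Prove connectivity using dimension $4$. By Quinn's ambient smoothing (Lemma~\ref{lem_perturb_lf_to_sm} with Lemma~\ref{lem_sm_emb_dense}), every locally flat arc is joined by a $1$-simplex to a smooth arc. Any two smooth arcs in $D^4$ are smoothly isotopic, because a generic $1$-parameter family of arcs in a $4$-manifold has no double points.
\item Apply the topological isotopy extension theorem. It makes $\Homeo^+(D^4)$ (rel boundary) act transitively, with simplices lifting, so $\Emb^{lf}(I,D^4)\cong \Homeo^+(D^4)/\Homeo^+(D^4,I_0)$.
\item Both groups are contractible by the Alexander trick. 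This trick is applied to \emph{homeomorphisms}: $\lambda_t h\lambda_t^{-1}\to \mathrm{id}$, which gives a fiber-preserving homeomorphism of the total space, so local flatness at $t=0$ comes for free.
\end{enumerate}

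Your coning can be rescued along these lines. If $f_s=g_s\circ I_0$ for a continuous family of homeomorphisms $g_s$ over $\partial\Delta^n$, then $\lambda_t g_s\lambda_t^{-1}\circ I_0$ is family-locally-flat up to and including $t=0$. However, producing such $g_s$ over a sphere requires connectivity, isotopy extension, and contractibility of the stabilizer $\Homeo^+(D^4,I_0)$ — that is, precisely the paper's argument. You also asserted the Kan property without proof; the homogeneous-space description $G/H$ supplies it.
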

\begin{proof}
    By Lemmas \ref{lem_sm_emb_dense} and \ref{lem_perturb_lf_to_sm}, we know that every vertex of $\Emb^{lf}(I,D^4)$ is connected to a vertex represented by a smooth arc via a $1$-simplex. Every pair of smoothly embedded arcs is related to each other by a smooth homotopy, which can then be perturbed to a smooth isotopy by transversality. As a consequence,  $\Emb^{lf}(I,D^4)$ is connected.

    Let $\Homeo^+(D^4)$ denote the simplicial group of orientation-preserving homeomorphisms of $D^4$ (relative to the boundary), where the simplices are orientation-preserving homeomorphisms
    \[
    \Delta^n \times D^4 \to \Delta^n \times D^4
    \]
    that preserve the $\Delta^n$ coordinate and equal to the identity on a neighborhood of $\Delta^n\times \partial D^4$.
    Since $\Emb^{lf}(I,D^4)$ is connected, by the topological isotopy extension theorem, the simplicial group $\Homeo^+(D^4)$ acts transitively on  $\Emb^{lf}(I,D^4)$. Let $I_0$ be the standard arc in $D^4$, and let $\Homeo^+(D^4,I_0)$ denote the subgroup of $\Homeo^+(D^4)$ that fixes $I_0$ pointwise. Then 
    \[
    \Emb^{lf}(I,D^4)\cong \Homeo^+(D^4)/\Homeo^+(D^4,I_0).
    \]
    Both  $\Homeo^+(D^4)$ and $\Homeo^+(D^4,I_0)$ are contractible by the Alexander trick. Therefore, $\Emb^{lf}(I,D^4)$ is contractible.
\end{proof}

Now we prove Proposition \ref{prop_sm_to_lf_surj}.

\begin{proof}[Proof of Proposition \ref{prop_sm_to_lf_surj}]
Lemmas \ref{lem_sm_emb_dense} and \ref{lem_perturb_lf_to_sm} yield the surjectivity on $\pi_0$.
We only need to show the surjectivity on $\pi_1$. In the following, we will identify $\Delta^1$ with $[0,1]$, and represent elements in $\pi_1$ by $1$-simplices. 

Suppose a smooth arc $f_0:I\to X$ is the base point. 
Let $\sigma$ be a 1-simplex in $\Emb^{lf}(I,X)$ that represents an element of $\pi_1\Emb^{lf}(I,X)$. That is, $\sigma:\Delta^1 \times I \to \Delta^1\times X$ is a family-locally-flat embedding such that $f|_{\{0\}\times I}$ and $f|_{\{1\}\times I}$ are both given by $f_0$. After homotopy in $\Emb^{lf}(I,X)$, we may assume that $f|_{\{t\}\times I}$ is equal to the base point $f_0$ when $t$ is close to $0$, $1$. We show that $[\sigma]$ is homotopic to a $1$-simplex represented by a smooth map.

By smoothing, for each $\delta > 0$, we may find a smooth map
\[
\sigma' :\Delta^1\times [0,1]\to \Delta^1 \times X
\]
that preserves the $\Delta^1$ coordinate, is equal to $\sigma$ near the boundary $\partial (\Delta^1\times [0,1])$, such that $\dist_{C^0}(\sigma, \sigma')<\delta$. 
By transversality, we may perturb $\sigma'$ fiberwise and assume that it is a smooth embedding.  

Choose $\epsilon>0$ so that for every $t\in \Delta^1$, there exists a closed neighborhood $U_t$ of $\Ima \sigma|_{\{t\}\times [0,1]}$ homeomorphic to $D^4$ such that $U_t$ includes the $\epsilon$-neighborhood of $\Ima \sigma|_{\{t\}\times [0,1]}$. Such an $\epsilon$ exists because of the topological tubular neighborhood theorem for locally flat embeddings in $4$-manifolds and the compactness of $\Delta^1$. 

Decompose the interval $\Delta^1$ into sub-intervals bounded by $0=t_0<t_1<\dots<t_m=1$, where $t_i = i/m$. 
By Lemma \ref{lem_perturb_lf_to_sm}, we may choose $m$ sufficiently large and $\delta$ sufficiently small such that the following statements hold:
\begin{enumerate}
\item For each $i$, there exist ambient $ \epsilon/4$-isotopies from $\sigma|_{\{t_i\}\times I}$ to $\sigma'|_{\{t_i\}\times I}$. Moreover, the isotopy is constant with respect to time when $i=0$ or $m$.
\item For each $i$, the 1-parameter family of arcs $\sigma|_{\{t\}\times I}$ for $t\in[t_i,t_{i+1}]$ extends to an $\epsilon/4$-ambient isotopy. Similarly, the 1-parameter family of arcs $\sigma'|_{\{t\}\times I}$ for $t\in[t_i,t_{i+1}]$ extends to an $\epsilon/4$-ambient isotopy.
\end{enumerate}

Let $\xi_i$ denote the $\epsilon/4$-isotopy from $\sigma|_{\{t_i\}\times I}$ to $\sigma|_{\{t_{i+1}\}\times I}$.
Let $\xi_i'$ denote the $\epsilon/4$-isotopy from $\sigma'|_{\{t_i\}\times I}$ to $\sigma'|_{\{t_{i+1}\}\times I}$.
Let $\tau_i$ denote the $ \epsilon/4$-isotopy from $\sigma|_{\{t_i\}\times I}$ to $\sigma'|_{\{t_i\}\times I}$. Then for each $i$, the loop formed by $\xi_i$, $\tau_{i+1}$, $\overline{\xi_i'}$, $\overline{\tau_i'}$ is a family-locally-flat loop in the space of locally flat embeddings of arcs in $X$, whose images are included in $U_{t_i}$. By Lemma \ref{lem_contractible_Emb_lf_D4}, this loop is contractible in $\Emb^{lf}(I,U_{t_i})$, and hence is contractible in $\Emb^{lf}(I,X)$. As a result, the loop extends to a map from $D^2$ to $\Emb^{lf}(I,X)$. Therefore, there exists a family-locally-flat embedding 
\[
\xi: [0,1]\times \Delta^1\times [0,1]\to [0,1]\times \Delta^1\times X
\]
that preserve the $([0,1]\times \Delta^1)$ coordinate, such that $\xi|_{\{0\}\times \Delta^1\times [0,1]}=\sigma$, $\xi|_{\{1\}\times \Delta^1\times [0,1]}=\sigma'$. As a result, the 1-simplices in $\Emb^{lf}(I,X)$ represented by $\sigma'$ and $\sigma$ are homotopic relative to the base point. 
\end{proof}

Now we prove the following proposition.
\begin{Proposition}[{See also \cite{Robinson}}]
\label{prop_sm_to_top_iso}
    $\Emb^{sm}(I,X)\to \Emb^{top}(I,X)$ induces isomorphisms on $\pi_0$ and $\pi_1$. 
\end{Proposition}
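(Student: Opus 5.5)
We have to prove surjectivity and injectivity on $\pi_0$ and on $\pi_1$ for the map $\Emb^{sm}(I,X)\to\Emb^{top}(I,X)$. The common tool is a \emph{local contractibility statement for topological arcs close to smooth arcs}. Concretely, we first establish an analogue of Lemma \ref{lem_perturb_lf_to_sm} in the $C^0$ setting:

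\emph{For every smooth arc $g$ and $\epsilon>0$ there is $\delta>0$ such that any two smooth arcs $g_1,g_2$ with $\dist_{C^0}(g_i,g)<\delta$ are joined by a smooth $\epsilon$-isotopy. Moreover, any loop of smooth arcs all $\delta$-close to $g$ bounds a smooth disk family inside the $\epsilon$-neighborhood.}

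The isotopy part is exactly the argument of Lemma \ref{lem_perturb_lf_to_sm}: use the minimal geodesic homotopy, then perturb it to an isotopy via Lemma \ref{lem_sm_jet-transversality}. For the disk part, take the geodesic null-homotopy of the loop into the constant loop at $g$. This is a $2$-parameter family of smooth maps. Since $\dim X=4>2\dim I+1$ fails only barely, we use jet transversality for $2$-parameter families of arcs, with relative boundary conditions: double points occur in codimension $3$, so a generic $2$-parameter family of arcs has no double points. This is also where the dimension count enters.

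\textbf{$\pi_0$.} Surjectivity follows from Lemma \ref{lem_sm_emb_dense}. For injectivity, let $\gamma:[0,1]\to\Emb^{top}(I,X)$ be a path between smooth arcs. By compactness, choose a uniform $\epsilon$ and a fine subdivision $t_0<\dots<t_m$ such that each segment $\gamma([t_i,t_{i+1}])$ has $C^0$-diameter less than $\delta$. At each $t_i$, replace $\gamma(t_i)$ by a smooth arc $g_i$ that is $\delta$-close to it, using Lemma \ref{lem_sm_emb_dense}; at the endpoints take $g_i=\gamma(t_i)$. Consecutive arcs $g_i,g_{i+1}$ are then $3\delta$-close, so the local statement joins them by a smooth isotopy.

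\textbf{$\pi_1$ surjectivity.} Apply the same construction to a loop $\gamma$ to obtain a smooth loop $\gamma'$. We must show $\gamma'\simeq\gamma$ in $\Emb^{top}$. On each $[t_i,t_{i+1}]$ the two paths, together with short connecting segments, form a loop of topological arcs, all $C^0$-close to $g_i$. It therefore suffices to know that a small $C^0$-ball around a smooth arc in $\Emb^{top}(I,X)$ is contractible, or at least that small loops in it are null-homotopic in a slightly larger ball.

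This is the main obstacle: $\Emb^{top}$ is only a $C^0$ space, and we cannot straight-line homotope embeddings. I would try Cerf's technique, as the acknowledgments suggest. Fix a smooth tubular neighborhood $\nu\cong I\times D^3$ of $g$. The arcs in the ball are then graphs-like embeddings $I\to I\times D^3$ that are $C^0$-close to the core. One uses a canonical Alexander-trick-type contraction: push the arc radially, shrinking the $D^3$-factor and reparametrizing along $I$, together with a fiberwise ``coning'' in $D^4$ as in the Alexander trick. This continuously deforms any compact family in the ball onto the core arc $g$ through topological embeddings, continuously in $C^0$.

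The delicate point is that a $C^0$-close arc need not be monotone in the $I$-coordinate. So, rather than projecting, I would use the Alexander trick in the $4$-ball $\nu\cong D^4$ relative to the end germs: conjugate by the radial scaling $x\mapsto sx$. Each topological arc is in this way homotoped, continuously in the compact-open topology, to a standard arc. This shows the ball is contractible, which mirrors Lemma \ref{lem_contractible_Emb_lf_D4} but uses the compact-open topology directly.

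\textbf{$\pi_1$ injectivity.} Given a smooth loop that is null-homotopic in $\Emb^{top}$ through a map $D^2\to\Emb^{top}(I,X)$, triangulate $D^2$ finely. Assign smooth $\delta$-close arcs to the vertices and smooth isotopies to the edges as above. Each small triangle then yields a smooth loop near a single smooth arc, which bounds a smooth disk by the $2$-parameter part of the local statement. Gluing these disks gives a smooth null-homotopy. Boundary edges are kept as the original smooth loop, which requires choosing the subdivision compatibly with it.
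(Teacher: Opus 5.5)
Your local statement is true, but your proof of its disk half is not, and your $\pi_1$-injectivity argument depends on it. The dimension count is off. For a $k$-parameter family $P^k\times I\to X^4$, the double-point locus $\{(p,s,t): s\neq t,\ F(p,s)=F(p,t)\}$ is cut out by $4$ equations in a $(k+2)$-dimensional space. So arcs with a double point have codimension $2$ in the parameter space, not $3$; the relevant inequality is $\dim X>2\dim I+k$, which fails exactly at $k=2$.

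Consequently a generic $1$-parameter family is embedded, but a generic $2$-parameter family has finitely many parameters $p_i$ at which the arc has a clean double point. These cannot be removed by perturbation. They are precisely the source of the $\bZ[\pi_1(X)\setminus\{1\}]$ term in the Dax sequence: if generic $2$-parameter families were embedded, $\pi_1\Emb^{sm}(I,X)\to\pi_1\Map(I,X)\cong\pi_2(X)$ would always be injective, which fails for $X=S^1\times D^3$.

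The missing idea is a local removal of these double points, which is what Lemma \ref{lem_local_extension_sm} supplies.
\begin{enumerate}
\item Because the family is $C^0$-close to a fixed embedded arc $f$, the loop cut off by each double point has diameter controlled by the moduli of continuity and injectivity of $f$ (the paper's $\delta'$, $\delta''$).
\item Hence that loop bounds a tiny disk whose $4$-ball neighborhood meets $\xi(p_i)$ in a single subarc.
\item The family over a small disk $U_i\ni p_i$ is then a loop in $\Emb^{sm}(I,D^4)$. By Lemma \ref{lem_Emb_D4_trivial}, itself a consequence of the smooth Dax isomorphism, this loop is null-homotopic, so $\xi|_{U_i}$ can be replaced by embeddings.
\end{enumerate}
Since this $\delta$ depends on the injectivity modulus of the center, the local statement should be centered at the topological arcs $\xi(v)$, with $\delta$ depending continuously on them. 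Compactness of $D^2$ then gives a uniform mesh. Centering at smooth approximations $g$, as you do, leaves this uniformity unaddressed.

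Your $\pi_0$-surjectivity also does not follow from Lemma \ref{lem_sm_emb_dense} alone. Density gives smooth arcs near $f$, but no path in $\Emb^{top}(I,X)$ to $f$. The paper builds such a path by concatenating infinitely many shrinking smooth isotopies $f_n\to f_{n+1}$.

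Your Alexander-trick construction would also give that path. That part of your plan is sound and genuinely different from the paper's. Take a chart $\phi\colon\nu\to D^4$ sending the core $g$ to the linearly parametrized diameter. Let $h_s$ follow the diameter outside $sD^4$ and $s\cdot\phi h$ (reparametrized) inside. Then:
\begin{itemize}
\item $h_s$ is an embedding for every arc $h$ in $\nu$, even a wild one;
\item $h_s$ keeps the end germs;
\item $\dist(\phi h_s,\phi g)\le s\,\dist(\phi h,\phi g)$.
\end{itemize}
So small $C^0$-balls around $g$ in $\Emb^{top}(I,X)$ are contractible. This yields $\pi_0$- and $\pi_1$-surjectivity without the infinite concatenation and without any Dax input, whereas the paper's surjectivity argument already uses Lemma \ref{lem_local_extension_sm}.

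This trick cannot repair injectivity, however. The family $h_s$ is not smooth in $s$ at $s=0$, consistent with $\pi_2\Emb^{sm}(I,D^4)\cong\bZ$. A smooth null-homotopy therefore genuinely needs the local Dax input described above.
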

The proof is essentially a simplification of the arguments of Cerf \cite{cerf1961topologie} and Robinson \cite{robinson1971homotopy,Robinson} for the special case of embedded arcs in $4$-manifolds. We start with the following two lemmas.
\begin{Lemma}
\label{lem_Emb_D4_trivial}
$\pi_1\Emb^{sm}(I,D^4)$ is trivial. 
\end{Lemma}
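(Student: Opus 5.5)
We prove that $\pi_1\Emb^{sm}(I,D^4)$ is trivial by comparing the embedding space with the space of smooth immersions (or maps) and with the space of arcs modulo the diffeomorphism group.

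\textbf{Step 1: reduce to a fibration.} Consider the restriction fibration
\[
\mathrm{Diff}_\partial(D^4)\to \Emb^{sm}(I,D^4),
\]
given by acting on the standard arc $I_0$. Its fiber is the group of diffeomorphisms fixing $I_0$ pointwise together with the boundary germs. By the uniqueness of tubular neighborhoods this fiber is equivalent to $\mathrm{Diff}_\partial$ of the complement $D^4\setminus \nu(I_0)\cong D^4$, up to the framing data along $I_0$. This route forces us to understand $\pi_0\mathrm{Diff}_\partial(D^4)$, which is unknown, so it is not the way to go.

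\textbf{Step 2: the direct Dax route (the plan we commit to).} Instead we use the smooth Dax sequence for $X=D^4$. We take the smooth Dax isomorphism of Gabai \cite{Gabai2021} as the available input, since this section is about the smooth case. It gives an exact sequence
\[
\pi_3(D^4)\to \mathbb{Z}[\pi_1(D^4)\setminus\{1\}]\to \pi_1\Emb^{sm}(I,D^4)\to \pi_2(D^4)\to 0.
\]
Since $D^4$ is contractible, we have $\pi_1(D^4)=1$, so the group $\mathbb{Z}[\pi_1\setminus\{1\}]$ is zero. We also have $\pi_2(D^4)=0$. Exactness then gives $\pi_1\Emb^{sm}(I,D^4)=0$.

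\textbf{Step 3: a self-contained check via Dax's theorem.} If we want to avoid citing the full sequence, we use Dax's comparison instead. The map $\Emb^{sm}(I,D^4)\to\Imm^{sm}(I,D^4)$ is $1$-connected up to a double-point term. Concretely, a loop of embeddings is null-homotopic through immersions, because $\Imm^{sm}(I,D^4)$ with fixed boundary germs is homotopy equivalent to $\Omega S^3$. This space has $\pi_1=\pi_2(S^3)=0$, by the Smale--Hirsch theorem.

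We then make the null-homotopy $D^2\to\Imm$ generic. Its self-intersections form isolated double points in the $2$-parameter family. Each double point carries a group element in $\pi_1(D^4)=1$ with a sign. Double points with trivial group element can be removed in pairs or individually by the standard Dax/Gabai local move, as in the construction of the Dax invariant.

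\textbf{Main obstacle.} The delicate step is this last one: showing that a double point labelled by the trivial group element is removable. It is exactly where Dax's theorem is used, since the trivial group element corresponds to the $\{1\}$ omitted from the group ring. We therefore rely on Dax \cite{Dax1972} and Gabai \cite{Gabai2021} for this step rather than reproving it.
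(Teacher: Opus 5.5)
Your committed argument in Step 2 is exactly the paper's proof: you apply the classical smooth Dax exact sequence to $X=D^4$, where $\mathbb{Z}[\pi_1(D^4)\setminus\{1\}]=0$ and $\pi_2(D^4)=0$, and exactness forces $\pi_1\Emb^{sm}(I,D^4)=0$. Steps 1 and 3 are unnecessary detours; Step 3 ultimately rests on the same Dax input anyway.
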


\begin{proof}
    This follows immediately from the classical Dax isomorphism.
\end{proof}

\begin{Lemma}[{see also \cite[Chapter 1]{robinson1971homotopy}}]
\label{lem_local_extension_sm}
    For each $f\in \Emb^{top}(I,X)$ and each $\epsilon > 0 $, there exists $\delta>0$, such that for $i=0,1$, every map $S^i\to \Emb^{sm}(I,X)$ within the $\delta$-neighborhood of $f$ (with respect to the $C^0$ topology) can be extended to a map $D^{i+1}\to \Emb^{sm}(I,X)$ within the $\epsilon$-neighborhood of $f$. 

    Moreover, $\delta$ can be taken to depend continuously on $\epsilon$ and $f$.
\end{Lemma}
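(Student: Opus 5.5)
The plan is to reduce to the model case $D^4$: a small neighborhood of the topological arc $f$ is a $4$-ball, inside which smooth arcs behave as in $D^4$. Lemma \ref{lem_Emb_D4_trivial} then settles $\pi_1$, and the connectedness of $\Emb^{sm}(I,D^4)$ settles $\pi_0$. The difficulty is that $f$ is only a topological embedding, not necessarily locally flat, so a neighborhood of $\Ima f$ need not contain a nice ball adapted to $f$. To get around this, I would first replace $f$ by a smooth arc $f'$. By Lemma \ref{lem_sm_emb_dense}, we can choose $f'\in\Emb^{sm}(I,X)$ with $\dist_{C^0}(f,f')<\epsilon/10$. Since the boundary germs are fixed and smooth, $f'$ agrees with $f$ near the endpoints.

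\textbf{Step 1: a ball around $f'$.} The arc $f'$ is smoothly embedded with fixed transverse germs at $p_0,p_1$. Take a closed tubular neighborhood $U$ of $\Ima f'$ with $U\subset N_{\epsilon/2}(\Ima f')$, the $\epsilon/2$-neighborhood. Then $U$ is diffeomorphic to $I\times D^3\cong D^4$ after rounding corners, and it meets $\partial X$ in two $3$-discs around $p_0,p_1$. Choose $\eta>0$ with $N_\eta(\Ima f')\subset U$, and set $\delta=\eta/2$, shrinking $\delta$ further so that $\delta<\epsilon/10$. Now suppose $g$ lies in the $\delta$-neighborhood of $f$. Since $f'$ and $f$ are within $\epsilon/10$, making $\delta$ small does not by itself force $g$ to be close to $f'$. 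So instead I will choose $f'$ closer to $f$: take $\dist_{C^0}(f,f')<\eta'/2$, where $\eta'$ is chosen for $f'$. This is circular, so the cleaner route is the following. First fix any smooth $f'$ with $\dist_{C^0}(f,f')<\epsilon/10$, then fix its tube $U$ and the margin $\eta$, and then observe that arcs within $\delta$ of $f$ are within $\delta+\epsilon/10$ of $f'$. These arcs need not lie inside $U$. To fix this, I will take the tube radius large, of order $\epsilon/3$, rather than small. The tubular neighborhood theorem only gives radius bounded by the injectivity/normal radius of $f'$, which can be tiny. This is the main obstacle. My fallback is to use the topological ball structure instead. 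The set $N_{\epsilon/2}(\Ima f)$ contains a compact ball-like region $V$ containing $N_{\delta}(\Ima f)$: since $\Ima f$ is a cellular arc (arcs in manifolds of dimension $\ge 4$ are cellular, being tame-or-not; cellularity holds for any arc in dimension $\neq 3$ by Bing/Brown-type results), there is an open $4$-cell $V$ with $\Ima f\subset V\subset N_{\epsilon/2}(\Ima f)$, meeting $\partial X$ in half-balls around $p_0,p_1$. Then $\delta$ is chosen so that $N_\delta(\Ima f)\subset V$. I take $V$ with a smooth structure inherited from $X$. It is an open subset of $X$ homeomorphic to $\mathbb{R}^4$ (or a half-space version), and by uniqueness of smooth structures on $\mathbb{R}^4$ away from exotic cases we cannot directly conclude that it is standard. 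So I will instead shrink to a smoothly embedded closed ball: a compact subset of $V$ containing $N_\delta(\Ima f)$ is contained in a smoothly embedded $4$-ball $B\subset V$. This holds because a compact set in a topological open cell lies in a topological ball, and engulfing can be done smoothly via a smooth isotopy pushing a small smooth ball.

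\textbf{Step 2: extension in $B$.} Given $S^i\to\Emb^{sm}(I,X)$ with all arcs inside $N_\delta(\Ima f)\subset B$, regard it as a map into $\Emb^{sm}(I,B)$. Here $B$ is a smooth $4$-ball whose boundary contains neighborhoods of $p_0,p_1$ in $\partial X$. For $i=0$, $\Emb^{sm}(I,B)$ is connected: two smooth arcs are homotopic rel ends, and the homotopy can be perturbed to an isotopy by Lemma \ref{lem_sm_jet-transversality} applied in $B$. For $i=1$, Lemma \ref{lem_Emb_D4_trivial} gives a null-homotopy. In both cases the extension $D^{i+1}\to\Emb^{sm}(I,B)$ has every arc inside $B\subset N_{\epsilon/2}(\Ima f)$.

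\textbf{Step 3: closeness and continuity.} Lying in $B$ does not yet give $C^0$-closeness to $f$ as parametrized maps. To get it, I will subdivide $I$ into short intervals and choose small balls $B_j$ around $f(I_j)$. I then run a relative version of Step 2 inductively along $I$, using the two-sided cellular neighborhoods of sub-arcs together with Lemma \ref{lem_sm_jet-transversality} to keep pieces localized. This controls the parametrized distance by $\epsilon$. Finally, every choice above (the modulus of continuity of $f$, and the cell margins) varies upper-semicontinuously in $f$ and linearly in $\epsilon$, so $\delta$ can be replaced by a continuous minorant in $(\epsilon,f)$.
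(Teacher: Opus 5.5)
Your reduction to a single ball around $\Ima f$ fails at the fallback in Step 1: it is not true that every arc in a manifold of dimension $\neq 3$ is cellular. Blankinship (1951) constructed arcs in $\mathbb{R}^n$ for every $n\ge 3$, in particular $n=4$, whose complements are not simply connected. Such an arc $A$ cannot be cellular, since cellularity would give $\mathbb{R}^n\setminus A\cong\mathbb{R}^n\setminus\{\mathrm{pt}\}$. Splice such a wild piece into the interior of an arc with the prescribed boundary germs. This gives an $f\in\Emb^{top}(I,X)$ for which, once $\epsilon$ is small, no $4$-cell $V\subset N_{\epsilon/2}(\Ima f)$ contains a neighborhood of $\Ima f$; otherwise, adding a collar over $V\cap\partial X$ would make the extended arc cellular. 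The lemma must cover such $f$, because in the proof of Proposition \ref{prop_sm_to_top_iso} it is applied along arbitrary paths and disks in $\Emb^{top}(I,X)$. So Steps 1--2 have nothing to work with, and Step 3 rests on the same cellularity claim for sub-arcs. Independently, the smooth engulfing in Step 1 is unjustified: $V$ could be an exotic $\mathbb{R}^4$, and every exotic $\mathbb{R}^4$ contains compact sets lying in no smooth $4$-ball. Step 3 is also genuinely needed, since the lemma asks for $C^0$-closeness of parametrized arcs rather than containment of images, yet it is only a sketch. Finally, a continuous $\delta$ requires local positive lower bounds (lower, not upper, semicontinuity).

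The missing idea is that you never need a nice neighborhood of $f$; you only manipulate smooth arcs near $f$ inside tiny standard balls. For $i=0$, join two smooth arcs $\delta$-close to $f$ by the pointwise minimal-geodesic homotopy. This is automatically $C^0$-small as a homotopy of parametrized maps, and Lemma \ref{lem_sm_jet-transversality} perturbs it to an isotopy. For $i=1$, cone $\sigma$ off by such geodesic homotopies to get $D^2\to\Map^{sm}(I,X)$. Perturb this to a generic family of immersions that are embeddings except at finitely many interior parameters $p_j$, where the arc $g=\xi(p_j)$ has one clean double point $g(t_1)=g(t_2)$. Because $f$ is injective, $f^{-1}$ is uniformly continuous on $\Ima f$. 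So $\dist(f(t_1),f(t_2))\le 2\delta$ forces $|t_1-t_2|$ to be small, and the loop $g([t_1,t_2])$ has tiny diameter. This loop bounds a tiny smooth disk with interior disjoint from $g$. Its tubular neighborhood is a standard smooth $D^4$ of tiny diameter meeting $g$ in a single sub-arc. After localizing the perturbation, the arcs over a small circle around $p_j$ form a loop in $\Emb^{sm}$ of this ball. Lemma \ref{lem_Emb_D4_trivial} fills this loop in, at $C^0$-cost bounded by the diameter of the ball. Wildness of $f$ and exotic smooth structures never enter, and the parametrized $C^0$-control comes for free from the geodesic homotopy.
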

\begin{proof}
Choose $\delta$ sufficiently small so that every pair of smooth arcs in the $\delta$-neighborhood of $f$ admits a unique-minimal-geodesic homotopy in the $\epsilon/2$-neighborhood of $f$. We may perturb the homotopy so that it becomes a smooth isotopy in the $\epsilon/2$-neighborhood of $f$. This finishes the proof for $i=0$.

For $i=1$, let $\Imm^{sm}(I,X)$ denote the space of smooth immersions with the same boundary conditions as the embedding spaces, and suppose we are given a map $\sigma: S^1\to \Emb^{sm}(I,X)$ in the $\delta$-neighborhood of $f$. 
By perturbing the unique-minimal-geodesic homotopy, we obtain a map 
\[
\xi: D^2\to \Imm^{sm}(I,X),
\]
such that 
\begin{enumerate}
    \item $\xi|_{\partial D^2} = \sigma$,
    \item There exist finitely many points $p_1,\dots,p_m$ in the interior of $D^2$, where each $\xi(p_i)$ is an immersed arc with a clean double point $x_i\in X$,
    \item For every $p\notin \{p_1,\dots,p_m\}$, the arc $\xi(p)$ is embedded. 
\end{enumerate}

Let 
\[
\delta' = \sup_{\dist(f(t_1),f(t_2))\le 2\delta} |t_1-t_2|,
\]
and let 
\[
\delta'' = \sup_{|t_1-t_2|\le \delta'} \dist(f(t_1),f(t_2)).
\]
Since $f$ is an injection and is continuous, the values of $\delta'$ and $\delta''$ converge to zero as $\delta\to 0$. 

For each $p_i$, the arc $\xi(p_i)$ has a unique self-intersection point, so its image contains a unique loop. The loop has diameter at most $\delta''$. We may choose $\delta$ sufficiently small so that every embedded loop with diameter at most $\delta''$ in the $\epsilon/2$-neighborhood of $\Ima(f)$ bounds an embedded $2$-disk with diameter at most $2\delta''$. Consider a $2$-disk $D^{(i)}$ bounded by the loop in the image of $\xi(p_i)$. After perturbation, we may require that the interior of $D^{(i)}$ is disjoint from $\xi(p_i)$. Let $\nu(D^{(i)})$ be a closed neighborhood of $D^{(i)}$ diffeomorphic to $D^4$. We may require that the diameter of  $\nu(D^{(i)})$ is less than $3\delta''$, and that the intersection of $\nu(D^{(i)})$ with $\xi(p_i)$ has one connected component.
Perturbing the boundary of $\nu(D^{(i)})$, we may assume that it intersects $\xi(p_i)$ transversely. By Lemma \ref{lem_Emb_D4_trivial}, we conclude that the map $\xi$ can be modified in an arbitrarily small neighborhood $U_i$ of $p_i$ so that $\xi(U_i)$ only contains embedded arcs, and that the $C^0$-distance between $\xi$ and the modified $\xi$ is bounded by $3\delta''$. Choose the $U_i$'s so that they are mutually disjoint, and choose $\delta$ such that $3\delta''<\epsilon/2$. Modifying $\xi$ on all $U_i$'s as above then yields the desired extension map.  
\end{proof}

\begin{proof}[Proof of Proposition \ref{prop_sm_to_top_iso}]

{\bf Surjectivity on $\pi_0$:} For every topologically embedded arc $f\in \Emb^{top}(I,X)$, by smoothing and transversality, there is a sequence $f_n\in \Emb^{sm}(I,X)$ that converges to $f$ in $C^0$. After taking a subsequence, we may assume that $\dist_{C^0}(f_n,f)<1/2^n$. There exists $N$ such that for all $i\ge N$, there is a unique-minimal-geodesic homotopy from $f_i$ to $f_{i+1}$; by transversality, after perturbation, this gives a path in $\Emb^{sm}(I,X)$ from $f_i$ to $f_{i+1}$ with $C^0$-length at most $2\dist_{C^0}(f_i,f_{i+1})$. The concatenation of these paths for all $i\ge N$ gives a continuous path from $f_{n_0}$ to $f$ in $\Emb^{top}(I,X)$. 

\bigskip 
{\bf Injectivity on $\pi_0$:} Suppose $f_0, f_1\in \Emb^{sm}(I,X)$, and $f_t\in \Emb^{top}(I,X)$ $(t\in [0,1])$ is a path connecting $f_0$ and $f_1$. After homotopy in $\Emb^{top}(I,X)$, we may assume that $f_t$ is locally constant with respect to $t$ near $t=0$ and $1$. By smoothing, there is a smooth family $\{f_t'\}$ of (not necessarily injective) maps $I\to X$ with the same boundary condition, such that $f_t' = f_t$ when $t$ is close to $0$ and $1$. By transversality, the smooth family $\{f_t'\}$ can be perturbed to a smooth family of embedded arcs with the same boundary condition and the same values near $t=0$ and $1$. Therefore, $f_0$ and $f_1$ are connected in $\Emb^{sm}(I,X)$. 

\bigskip 
{\bf Surjectivity on $\pi_1$:} Let $f_0\in \Emb^{sm}(I,X)$ be the base point. Suppose $f_t\in \Emb^{top}(I,X)$ $(t\in [0,1])$ is a path such that $f_1=f_0$. We show that the element in $\pi_1 \Emb^{top}(I,X)$ represented by $\{f_t\}$ is in the image of $\pi_1 \Emb^{sm}(I,X)$. After homotopy in $\Emb^{top}(I,X)$, we may assume that $f_t=f_0=f_1$ near $t=0$ and $1$. By smoothing and transversality, we obtain a sequence of smooth families of embedded arcs $\{f_t^{(n)}\}$ ($n\ge 1$) with the same boundary conditions and the same values near $t=0$ and $t=1$ such that the sequence $\{f_t^{(n)}\}_{n\ge 1}$ converges to $f_t$ uniformly with respect to $t$ in $C^0$.

By decomposing the interval $t\in [0,1]$ into sufficiently small sub-intervals and applying Lemma \ref{lem_local_extension_sm}, we obtain that for each $\epsilon > 0$, there exists $n(\epsilon)$, such that for each $n\ge n(\epsilon)$, the path $\{f_t^{(n)}\}_{t\in[0,1]}$ is homotopic to $\{f_t^{(n+1)}\}_{t\in[0,1]}$ in $\Emb^{sm}(I,X)$ relative to $t\in \{0,1\}$ via a homotopy within the $\epsilon$-neighborhood of the path $\{f_t\}_{t\in[0,1]}$. 

Taking a sequence of values of $\epsilon$'s converging to zero, we obtain an integer $N$ and a sequence of homotopies from $\{f_t^{(n)}\}_{t\in[0,1]}$ to $\{f_t^{(n+1)}\}_{t\in[0,1]}$ in $\Emb^{sm}(I,X)$ for all $n\ge N$, such that the $C^0$-distance between the homotopy from $\{f_t^{(n)}\}_{t\in[0,1]}$ to $\{f_t^{(n+1)}\}_{t\in[0,1]}$ and the constant homotopy at $\{f_t\}_{t\in[0,1]}$ converges to zero as $n\to \infty$. The concatenation of these homotopies yields a homotopy from $\{f_t^{(N)}\}$ to $\{f_t\}$ in $\Emb^{top}(I,X)$ relative to $t\in \{0,1\}$.

\bigskip 
{\bf Injectivity on $\pi_1$:} Suppose there is a map $\sigma: S^1\to \Emb^{sm}(I,X)$ that extends to $\xi:D^2\to \Emb^{top}(I,X)$. We need to construct another map $\xi':D^2\to \Emb^{sm}(I,X)$ with the same boundary as $\xi$. First, after homotopy in $\Emb^{top}(I,X)$, we may assume that the image of $\xi$ on a neighborhood of $\partial D^2$ is contained in $\Emb^{sm}(I,X)$. Now, identify $D^2$ with $I\times I$, and divide $I\times I$ into $m\times m$ sub-squares. Let $L$ denote the $1$-skeleton of the subdivision. By smoothing and transversality, for each $\delta>0$, we may perturb $\xi|_L$ to a map $\xi'$ on $L$, such that the image of $\xi'$ is contained in $\Emb^{sm}(I,X)$, and that the $C^0$ distance between $\xi'$ and $\xi|_L$ is less than $\delta$, and that $\xi=\xi'$ on $\partial (I\times I)$.   By Lemma \ref{lem_local_extension_sm}, for $m$ sufficiently large and $\delta$ sufficiently small, the map $\xi'$ extends over the $2$-cells to give a map $I\times I\to \Emb^{sm}(I,X)$ with the same boundary as $\xi$. 
\end{proof}

As an immediate corollary of Propositions \ref{prop_sm_to_lf_surj} and \ref{prop_sm_to_top_iso}, we have:

\begin{Corollary}
    The canonical maps $\Emb^{sm}(I,X)\to \Emb^{lf}(I,X)$ and $|\Emb^{lf}(I,X)|\to \Emb^{top}(I,X)$ induce isomorphisms on $\pi_0$ and $\pi_1$. 
\end{Corollary}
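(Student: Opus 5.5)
The plan is to use the factorization
\[
\Emb^{sm}(I,X)\xrightarrow{\ a\ } |\Emb^{lf}(I,X)| \xrightarrow{\ b\ } \Emb^{top}(I,X).
\]
The composite $b\circ a$ is the canonical inclusion of smooth arcs into topological arcs, because a smooth simplex is in particular family-locally-flat and realizes to the same family of embeddings. Proposition \ref{prop_sm_to_top_iso} says that $b\circ a$ induces isomorphisms on $\pi_0$ and $\pi_1$. Proposition \ref{prop_sm_to_lf_surj} says that $a$ induces surjections on $\pi_0$ and on $\pi_1$, for any base point in $\Emb^{sm}(I,X)$. The argument is then a diagram chase, done first on $\pi_0$ and then on $\pi_1$.

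\textbf{$\pi_0$.} The map $a_*$ is surjective, and $(b\circ a)_* = b_*\circ a_*$ is bijective, so $a_*$ is also injective. Hence $a_*$ is a bijection, and therefore $b_* = (b\circ a)_*\circ a_*^{-1}$ is a bijection.

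\textbf{$\pi_1$.} The one point needing care is base points. Take any base point $y$ of $|\Emb^{lf}(I,X)|$. By surjectivity of $a_*$ on $\pi_0$, $y$ lies in the same path component as $a(f_0)$ for some smooth arc $f_0$. Conjugating by a path from $y$ to $a(f_0)$, and by its image under $b$, identifies the relevant $\pi_1$ groups compatibly with $b_*$. So it suffices to treat base points of the form $a(f_0)$. At such a base point, $a_*:\pi_1(\Emb^{sm},f_0)\to\pi_1(|\Emb^{lf}|,a(f_0))$ is surjective, and $b_*a_*$ is an isomorphism. The same argument as for $\pi_0$ then shows that $a_*$ is injective, hence an isomorphism, and so $b_*$ is an isomorphism.

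A small technical point concerns the simplicial set $\Emb^{lf}(I,X)$. Proposition \ref{prop_sm_to_lf_surj} is phrased in terms of $1$-simplices, which represent elements of $\pi_1$ of the realization. This is legitimate once one knows $\Emb^{lf}(I,X)$ is a Kan complex, which holds for such embedding simplicial sets. Alternatively, one reads the proposition directly as a statement about $\pi_1|\Emb^{lf}(I,X)|$. I expect no real obstacle here: all the substantive work is contained in the two propositions, and the corollary also gives the smooth-to-locally-flat isomorphism as a by-product.
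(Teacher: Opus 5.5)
Your proposal is correct and is exactly the argument the paper has in mind when it calls this an immediate corollary of Propositions \ref{prop_sm_to_lf_surj} and \ref{prop_sm_to_top_iso}: factor the smooth-to-topological map through $|\Emb^{lf}(I,X)|$, and use surjectivity of the first map plus bijectivity of the composite to get injectivity, and hence bijectivity, of both maps on $\pi_0$ and $\pi_1$. Your explicit handling of base points not coming from smooth arcs, via $\pi_0$-surjectivity and change-of-basepoint, is a detail the paper leaves implicit.
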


\section{Independence of the smooth structure}

Suppose $X$ admits a smooth structure. 
The Dax isomorphism states that there is a long exact sequence 
\begin{equation}
\label{eqn_geometric_Dax}
\pi_3(X)\to \bZ[\pi_1(X)\setminus\{1\}]\to \pi_1\Emb^{sm}(I,X)\to \pi_2(X)\to 0.
\end{equation}
An explicit geometric description of the maps in \eqref{eqn_geometric_Dax} is given by Gabai in \cite[Theorem 3.7]{Gabai2021}. The maps in \eqref{eqn_geometric_Dax}, as well as the proof of its exactness, depend on family-transversality results for maps between smooth manifolds, and hence a priori depend on the smooth structure.

Now suppose that $X$ admits two smooth structures $\tau_1$ and $\tau_2$. Suppose $f_0\in \Emb^{top}(I,X)$ is smooth with respect to both $\tau_1$ and $\tau_2$, and let $f_0$ be the base point of the embedding spaces. Proposition \ref{prop_sm_to_top_iso} shows that we have canonical isomorphisms
\[
\pi_1\Emb^{sm}(I,X_{\tau_1})\to \pi_1\Emb^{top}(I,X) \leftarrow \pi_1\Emb^{sm}(I,X_{\tau_2}).
\]
Therefore, by identifying $\pi_1\Emb^{sm}(I,X_{\tau_1})$ with $\pi_1\Emb^{sm}(I,X_{\tau_2})$ using the above isomorphism, it makes sense to compare the Dax isomorphisms with respect to $\tau_1$ and $\tau_2$. The main result of this section is the following:

\begin{Proposition}
\label{prop_Dax_independent_smooth_str}
    Let $\tau_1$, $\tau_2$, and the arc $f_0$ be as above. Then the exact sequences \eqref{eqn_geometric_Dax} defined with respect to $\tau_1$ and $\tau_2$ are the same. 
\end{Proposition}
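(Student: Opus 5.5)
Since the two outer terms $\pi_3(X)$, $\mathbb{Z}[\pi_1(X)\setminus\{1\}]$ and $\pi_2(X)$ are purely homotopy-theoretic, the plan is to compare the three maps of \eqref{eqn_geometric_Dax} one at a time, using the identification of Proposition \ref{prop_sm_to_top_iso}. I will use Gabai's geometric description \cite[Theorem 3.7]{Gabai2021}.

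The map $\pi_1\Emb^{sm}(I,X)\to\pi_2(X)$ sends a loop of arcs $\{f_t\}$ to the sphere swept out by the map $S^1\times I\to X$, $(t,s)\mapsto f_t(s)$, with $\{t\}\times\partial I$ collapsed. This construction only uses the underlying continuous family of arcs. It therefore factors as $\pi_1\Emb^{sm}\to\pi_1\Emb^{top}\to\pi_2(X)$, with the second arrow defined by the same formula, and is independent of $\tau_i$.

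For the map $\mathbb{Z}[\pi_1(X)\setminus\{1\}]\to\pi_1\Emb^{sm}(I,X)$, a generator $g$ is sent to the loop obtained by sliding a small finger of $f_0$ along a path representing $g$ and then spinning it around $f_0$ (Gabai's ``spinning'' loop). The key point is locality. For $\tau_1$ and $\tau_2$, I choose the guiding path and finger so that the whole loop is supported in a neighborhood $N\cong D^4$ of $f_0\cup\gamma$ together with a thin tube. More precisely, I will realise the $\tau_1$-loop and the $\tau_2$-loop as topological loops supported in a common neighborhood $W$ of $\Ima f_0\cup\gamma$, where $W$ is a regular neighborhood homeomorphic to $S^1\times D^3$ with an arc. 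The main point is then to show the two loops are homotopic in $\Emb^{top}(I,X)$.

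\textbf{Main obstacle.} Two smooth structures on $W$ need not be isotopic. However, on a neighborhood of a 1-complex they are: smooth structures on $S^1\times D^3$, or on a $1$-handlebody, agree up to isotopy by the product structure theorem, since $\pi_1\Homeo$ considerations here reduce to the $3$-dimensional case. So I will first isotope $\tau_2$ to agree with $\tau_1$ near $\Ima f_0\cup\gamma$ via a topological isotopy $\phi_s$ fixing $f_0$. Transporting the $\tau_2$-spinning loop by $\phi_1$ gives the $\tau_1$-spinning loop, and the isotopy gives a homotopy in $\Emb^{top}$. Hence the images of $g$ agree. If the product-structure step fails, I would fall back on Quinn's handle-straightening \cite{quinn1982ends} near $1$-complexes.

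Finally, the map $\pi_3(X)\to\mathbb{Z}[\pi_1\setminus\{1\}]$ is Dax's invariant of a family of arcs $S^2\to\Imm$ built from the adjoint $\pi_3(X)\cong\pi_2\Map(I,X)$. It counts signed double points of a generic $2$-parameter family, indexed by group elements. I plan to show it is determined topologically by characterising it as the boundary map: it is the unique map making the sequence exact and compatible with the composite $\pi_3\to\pi_2\Map(I,X)$ and the relative group $\pi_2(\Map,\Emb)$. Concretely, the double-point count for a family $D^2\to\Map$ with boundary in $\Emb$ equals the element of $\mathbb{Z}[\pi_1\setminus\{1\}]$ whose image under the already-identified second map is the boundary loop. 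So after modding out, injectivity needs care. I will instead prove directly that the double-point count equals an intersection number of a map of a $3$-manifold with the diagonal-type locus, computable in $H_*$ of the universal cover. This is a homotopy invariant of $X$, hence $\tau$-independent.
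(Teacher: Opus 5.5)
Your plan of checking the three maps separately inside $\pi_1\Emb^{top}(I,X)$ is legitimate, and the first step is fine: the sweep-out map to $\pi_2(X)$ only sees the underlying continuous family. The gap is in the middle map. The reason you give for making $\tau_1$ and $\tau_2$ agree near $\Ima f_0\cup\gamma$ does not work:
\begin{itemize}
\item the product structure theorem is a high-dimensional result and fails for $M^3\times\mathbb{R}$ (an exotic $\mathbb{R}^4$ is a non-product smoothing of $\mathbb{R}^3\times\mathbb{R}$);
\item ``smooth structures on a $1$-handlebody agree up to isotopy'' already contains the case of $D^4$, which is equivalent to the smooth $4$-dimensional Poincar\'e conjecture.
\end{itemize}
What your argument actually needs is only a germ statement. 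You need a topological ambient isotopy $\phi_s$, fixed near $\partial X$ and fixing $\Ima f_0$ pointwise for every $s$, such that $\phi_1$ is a $(\tau_1,\tau_2)$-diffeomorphism on some neighborhood of $\Ima f_0\cup\gamma$. The condition for every $s$ matters. Otherwise the family $\phi_s\circ\ell_1$ is only a free homotopy of loops, and you only get agreement up to conjugation by the loop $s\mapsto\phi_s\circ f_0$ in $\pi_1\Emb^{top}(I,X)$. Such a germ statement is genuinely $4$-dimensional input. It would need relative uniqueness of normal bundles of locally flat arcs, plus a version rel $\Ima f_0$ of Quinn's straightening of arcs and $1$-handles (cf.\ \cite{freedman2014topology,quinn1982ends}). ``Fall back on Quinn'' is a pointer rather than a proof, and this is exactly the step that carries the content.

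In the last step, exactness determines only the image of $\pi_3(X)\to\bZ[\pi_1(X)\setminus\{1\}]$ (as the kernel of the next map), not the map itself. So only your second idea survives, and it is correct once the dimensions are fixed. Take $\alpha\in\pi_3(X)\cong\pi_2(\Map(I,X),f_0)$ represented by $\{f_p\}_{p\in S^2}$, lift to $\widetilde X$, and consider the map $S^2\times\{0\le s_1\le s_2\le 1\}\to\widetilde X\times\widetilde X$, $(p,s_1,s_2)\mapsto(\tilde f_p(s_1),\tilde f_p(s_2))$. This is a $4$-dimensional relative cycle (not a $3$-manifold) whose boundary misses $\Delta_\eta=\{(x,\eta x)\}$ for $\eta\neq 1$. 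Its class in $H_4(\widetilde X^2,\widetilde X^2\setminus\Delta_\eta)\cong\bZ$ needs no transversality, and for a $\tau_i$-generic family it equals the signed count of double points labelled $\eta$.

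This is precisely the mechanism of the paper's proof, but the paper never compares the maps one at a time. It uses the $T_2$ model $\Emb^{sm}(I,X)\to\Map_\Delta(\Delta^2,C_2'\langle X,\partial\rangle)$, which is a $\pi_1$-isomorphism by \cite{sinha2009topology}. It maps this to $\Map_\Delta(\Delta^2,C_2^\tau(X))$, where $C_2^\tau(X)=\widetilde X^2\setminus\bigcup_{\eta\neq1}\Delta_\eta$. The Bousfield--Kan sequences and the five lemma show this is a $\pi_1$-isomorphism. The commutative square through $\Emb^{top}(I,X)$ then shows the whole sequence, middle map included, is the topologically defined one. That route needs no $4$-dimensional smoothing theory at all. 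Your route, once the germ statement is supplied, is more hands-on but depends on Freedman--Quinn/Quinn technology that the paper avoids.
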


\begin{remark}
    Note that the independence of the isomorphism classes of 
    \[
    \pi_i\Emb^{sm}(I,X), \quad (i=0,1)
    \]
    on the smooth structure already follows from Proposition \ref{prop_sm_to_top_iso}; the new content of Proposition \ref{prop_Dax_independent_smooth_str} is that all the homomorphisms arising from the Dax isomorphism also do not depend on the smooth structure. 

    If $M$ is a smooth closed $4$-manifold, the result of \cite{arone2022spaces} implies that 
    \[
    \pi_i\Emb^{sm}(S^1,X), \quad (i=0,1)
    \]
    is independent of the smooth structure. If $M$ is a simply connected, compact, smooth $4$-manifold, \cite{knudsen2024embedding} further proved that the homotopy type of  $\Emb^{sm}(\sqcup_m S^1,M)$ is independent of the smooth structure. 
\end{remark}

\subsection{An alternative description of the Dax isomorphism}
\label{subsec_Dax_by_emb_calculus}

We first present an equivalent formulation of the Dax isomorphism using embedding calculus (see also \cite[Section 1.1]{kosanovic2024knotted}). 

Suppose $X$ is a smooth $4$-manifold.
Let $C_n\langle X, \partial \rangle$ denote the simplicial compactification of the $n$-point configuration space of $X$, as defined by \cite[Definition 4.10]{sinha2009topology}\footnote{Our notation $C_n\langle X,\partial \rangle$ is equivalent to the notation $C_n\langle[X,\partial]\rangle$ in \cite{sinha2009topology}.}, with respect to the two fixed boundary points in the definition of the embedding spaces of arcs. We refer the reader to the appendix of \cite{lin2025mapping} for an exposition on the definition and basic properties of the simplicial compactification of the configuration space.

Let $C_n'\langle X, \partial \rangle$ be the $(S^3)^n$ bundle over $C_n\langle X, \partial \rangle$, where the fiber over each point is given by the product of the spheres of unit tangent vectors at all points in the configuration (see \cite[Definitions 4.9, 4.10]{sinha2009topology}). The definitions of both $C_n\langle X, \partial \rangle$ and $C_n'\langle X, \partial \rangle$ depend on the smooth structure of $X$.  The spaces $\{C_n'\langle X,\partial \rangle\}_{n\ge 0}$ form a cosimplicial space. Moreover, the cosimplicial structure is Reedy cofibrant: all coface maps, as well as the inclusion of the union of images of coface maps to $C_n'\langle X,\partial \rangle$ as a subset into $C_n'\langle X,\partial \rangle$, are cofibrations. For each $C_n'\langle X,\partial \rangle$, the images of all coface maps define a stratification on $C_n'\langle X,\partial \rangle$, which is indexed by the faces of $\Delta^n$.

Let 
\[
\Map_\Delta(\Delta^n,C_n'\langle X,\partial \rangle)
\]
denote the space of continuous maps that preserve the stratification structure indexed by the faces of $\Delta^n$, with the compact-open topology\footnote{The space $\Map_\Delta(\Delta^n,C_n'\langle X,\partial \rangle)$ is denoted by $\Map_n(M)$ in \cite{lin2025mapping}.}. Then for each $n$, we have a map
\begin{equation}
\label{eqn_sm_emb_to_configuration_space}
\Emb^{sm}(I,X) \to \Map_\Delta(\Delta^n,C_n'\langle X,\partial \rangle),
\end{equation}
which is defined by taking the induced map on the configuration spaces for each embedding.
By \cite[Theorem 5.4, Lemma 5.12]{sinha2009topology}, we know that \eqref{eqn_sm_emb_to_configuration_space} induces an isomorphism on $\pi_1$ when $n\ge 2$. 

By \cite[Section 5]{lin2025mapping}, a version of the Bousfield--Kan spectral sequence gives the following long exact sequence
\begin{multline}
\label{eqn_long_exact_Map_Delta2}
\oplus_3 \pi_3C_1'\langle X,\partial\rangle \to  \pi_3C_2'\langle X,\partial\rangle \to \pi_1 \Map_{\Delta}(\Delta^2,C_2'\langle X,\partial\rangle) 
\\
\to \oplus_3 \pi_2 C_1'\langle X,\partial\rangle \to \pi_2 C_2'\langle X,\partial \rangle \to 0,
\end{multline}
where 
\begin{align*}
 \pi_3C_1'\langle X,\partial\rangle &\cong \pi_3(X)\oplus \bZ,\\
\pi_3C_2'\langle X,\partial\rangle &\cong  (\pi_3(X)\oplus \bZ)^2 \oplus \mathbb{Z}[\pi_1(X)],\\
\pi_2 C_1'\langle X,\partial \rangle &\cong \pi_2(X),\\
\pi_2C_2'\langle X,\partial\rangle &\cong \pi_2(X)\oplus \pi_2(X) .
\end{align*}
The homomorphism 
\[
\oplus_3 \pi_3C_1'\langle X,\partial\rangle\to \pi_3C_2'\langle X,\partial\rangle
\]
takes two factors of $\pi_3(X)\oplus \bZ$ isomorphically to the two factors of $\pi_3(X)\oplus \bZ$ in $\pi_3C_2'\langle X,\partial\rangle$, and takes the other $\bZ$ factor to the generator of $\bZ[\pi_1(X)]$ given by $1\in \pi_1(X)$.

The homomorphism 
\[
\oplus_3 \pi_2(X)\cong \oplus_3 \pi_2 C_1'\langle X,\partial\rangle \to \pi_2 C_2'\langle X,\partial \rangle\cong \pi_2(X)\oplus \pi_2(X)
\]
takes $(a,b,c)$ to $(a-c,b-c)$.

As a result, we obtain a long exact sequence
\begin{equation}\label{eqn_exact_sequence_C2'}
\pi_3(X) \to \bZ[\pi_1(X)\setminus\{1\}]\to \pi_1\Map_{\Delta}(\Delta^2,C_2'\langle X,\partial\rangle)) \to \pi_2(X)\to 0.
\end{equation}
This, together with the fact that \eqref{eqn_sm_emb_to_configuration_space} induces an isomorphism on $\pi_1$ when $n=2$, gives an exact sequence
\begin{equation}
\label{eqn_sm_Dax_iso}
\pi_3(X) \to \bZ[\pi_1(X)\setminus\{1\}]\to \pi_1\Emb^{sm}(I,X) \to \pi_2(X)\to 0.
\end{equation}

\begin{Proposition}
    The exact sequence \eqref{eqn_sm_Dax_iso} is the same as the Dax isomorphism \eqref{eqn_geometric_Dax} described by Gabai in \cite[Theorem 3.7]{Gabai2021}.
\end{Proposition}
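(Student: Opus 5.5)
We have to show that the exact sequence \eqref{eqn_sm_Dax_iso}, which comes from the Bousfield--Kan sequence \eqref{eqn_long_exact_Map_Delta2}, coincides with Gabai's sequence \eqref{eqn_geometric_Dax}. Both are exact sequences with the same outer terms $\pi_3(X)$, $\bZ[\pi_1(X)\setminus\{1\}]$ and $\pi_2(X)$, and the middle term is identified via the $\pi_1$-isomorphism \eqref{eqn_sm_emb_to_configuration_space} for $n=2$. So it suffices to check the three maps one at a time:
\begin{itemize}
\item the map $\pi_1\Emb^{sm}(I,X)\to\pi_2(X)$;
\item the map $\bZ[\pi_1(X)\setminus\{1\}]\to\pi_1\Emb^{sm}(I,X)$;
\item the map $\pi_3(X)\to\bZ[\pi_1(X)\setminus\{1\}]$.
\end{itemize}
Each is compared with Gabai's geometric description in \cite[Theorem 3.7]{Gabai2021}.

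\emph{The map to $\pi_2(X)$.} For this I would unwind the edge map in \eqref{eqn_long_exact_Map_Delta2}. A loop of embeddings $f_t$ gives a loop in $\Map_\Delta(\Delta^2,C_2'\langle X,\partial\rangle)$. Restricting to the three vertices of $\Delta^2$ gives three loops of points of $X$, each with a tangent direction, together with the induced disks. The resulting class in $\oplus_3\pi_2 C_1'\langle X,\partial\rangle$, modulo the image $(a,b,c)\mapsto(a-c,b-c)$, is the class of the sphere swept out by the family $f_t$, with ends fixed at $p_0$ and $p_1$. I expect to check this directly from the definition of the Bousfield--Kan differential: the $1$-skeleton of $\Delta^n$ evaluated on an arc family gives the map $I\times S^1\to X$ whose boundary is collapsed. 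Gabai's map is precisely this swept sphere, so the two maps agree.

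\emph{The map from $\bZ[\pi_1(X)\setminus\{1\}]$.} Here the generator $g\in\pi_1(X)$, $g\ne1$, sits in $\pi_3C_2'\langle X,\partial\rangle$ as the $3$-sphere linking the diagonal, translated along $g$. Concretely, it is a family of pairs of points in which the second point rotates around a small $S^3$ centered on the first, after conjugating by $g$. Its image in $\pi_1\Map_\Delta$ comes from the boundary map of the fiber sequence over the $2$-simplex. I would realise it by an explicit loop of arcs: a small sub-arc of $f_0$ is pushed along a path representing $g$ and swung once around another point of $f_0$. This is exactly Gabai's spinning loop. To identify the two, I would evaluate the $2$-point configuration map on this loop. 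The evaluation should produce a $3$-cycle in $C_2'$ that winds once around the diagonal stratum with group label $g$, and this I would compute by a local model in a ball.

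\emph{The map from $\pi_3(X)$ and the main obstacle.} For $\pi_3(X)\to\bZ[\pi_1(X)\setminus\{1\}]$, I would read off the $d^2$-type map in the Bousfield--Kan sequence. This map is the intersection of the $3$-sphere $\alpha$ with the arc $f_0$, counted in the universal cover, i.e.\ Dax's double-point invariant of the $3$-parameter family of arcs swept out by $\alpha$. Gabai describes it the same way, up to symmetrization conventions. I expect this step, together with pinning down the signs and the $g\leftrightarrow g^{-1}$ conventions in all three comparisons, to be the main obstacle. To handle it, I would reduce to generic families and compute both invariants as signed counts of double points of a generic $2$-parameter family of immersed arcs. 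This is legitimate because both constructions factor through Dax's double-point count on the relative $\pi_2(\Imm,\Emb)$, so the comparison becomes a local one at a single clean double point, which can be verified in $D^4$.
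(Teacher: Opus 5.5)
Your overall plan is the paper's: compare the three maps one at a time. For the map to $\pi_2(X)$ you restrict to faces of $\Delta^2$. For the map from $\bZ[\pi_1(X)\setminus\{1\}]$ you glue a diagonal-linking class of $\pi_3C_2'\langle X,\partial\rangle$ into the trivial family and match it with Gabai's spinning loop, which is fine and is what the paper does. For the map from $\pi_3(X)$ you count double points. The third step, which you correctly identify as the main one, has a real gap.

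First, your description of $\pi_3(X)\to\bZ[\pi_1(X)\setminus\{1\}]$ as the equivariant intersection of $\alpha$ with $f_0$ is wrong. The map is induced by the alternating sum $(\delta^0_2)_*-(\delta^1_2)_*+(\delta^2_2)_*$ of coface maps, and $f_0$ enters only through base points. Take $f_0$ boundary-parallel. Then $\alpha$ can be pushed off a collar containing $f_0$, so your intersection number is zero. The map itself need not vanish, because a generic $S^2$-family of arcs representing $\alpha$ also has double points between different parts of the sweeping family, not just intersections with $f_0$. For example, let $a,b$ be spheres with trivial normal bundle meeting once with group element $g\neq 1$. The Samelson-product family for $[a,b]$, attached to a boundary-parallel $f_0$, has four such double points. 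Two of them cancel, leaving $\pm(g+g^{-1})\neq 0$. So ``Gabai describes it the same way'' cannot be the basis of the comparison; his map is the double-point count.

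Second, your fallback says it is ``legitimate because both constructions factor through Dax's double-point count.'' For the Bousfield--Kan side, that is exactly what has to be proved. The missing idea is the explicit homotopy the paper uses:
\begin{itemize}
\item Represent $\alpha$ by a $D^2$-family $\{f_s\}$ of immersed arcs, and consider $(s,u\le u')\mapsto (f_s(u),f_s(u'))$ on $D^2\times\Delta^2$.
\item Its boundary faces $u=0$, $u=u'$ (via the tangent direction) and $u'=1$ give the three coface images of $\alpha$.
\item For $u<u'$ it meets the diagonal only at the double points of the family.
\end{itemize}
Hence $(\delta^0_2)_*\alpha-(\delta^1_2)_*\alpha+(\delta^2_2)_*\alpha$ equals a sum of small linking $3$-spheres around the double points. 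Each is labelled by the group element of the double-point loop and signed by the local orientation. Only after this does the comparison with Gabai reduce to one clean double point, which you can then check in $D^4$.

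There are also some smaller slips. In the first step, the relevant restriction is to the three \emph{edges} of $\Delta^2$; the vertices go to fixed boundary configurations. The resulting class lies in the \emph{kernel} of $(a,b,c)\mapsto(a-c,b-c)$, i.e.\ the diagonal copy of $\pi_2(X)$. It is not a quotient by the image of that map. Finally, the family representing $\alpha\in\pi_3(X)\cong\pi_2\Omega X$ is $2$-parameter, not $3$-parameter.
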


\begin{proof}
We explicitly describe the maps in \eqref{eqn_long_exact_Map_Delta2} and \eqref{eqn_exact_sequence_C2'} and compare with the maps in \cite[Theorem 3.7]{Gabai2021}. The descriptions below follow from straightforward diagram chasing in the Bousfield--Kan spectral sequence. 

To differentiate notation, we will call the exact sequence described in \cite[Theorem 3.7]{Gabai2021} the \emph{geometric} Dax isomorphism.

The homomorphism $\pi_1 \Map_{\Delta}(\Delta^2,C_2'\langle X,\partial\rangle) 
\to \oplus_3 \pi_2 C_1'\langle X,\partial\rangle$ in \eqref{eqn_long_exact_Map_Delta2} is given by taking the restriction maps of $\Map_{\Delta}(\Delta^2,C_2'\langle X,\partial\rangle)$ to the three edges. Therefore, the map $\pi_1\Map_{\Delta}(\Delta^2,C_2'\langle X,\partial\rangle)) \to \pi_2(X)$ in \eqref{eqn_exact_sequence_C2'} is given by taking the homotopy class of the restriction to any one of the edges. As a result, the composition
\[
\pi_1\Emb^{sm}(I,X)\to \pi_1\Map_{\Delta}(\Delta^2,C_2'\langle X,\partial\rangle)) \to \pi_2(X)
\]
is given by taking the homotopy class of the trajectory, which agrees with the corresponding map in the geometric Dax isomorphism.

Before comparing the other maps in the exact sequences, note that there are two forgetful maps from $C_2'\langle X,\partial \rangle$ to $C_1'\langle X,\partial \rangle$, and the kernel of 
\begin{equation}
\label{eqn_pi3C2'_to_two_pi3C1'}
\pi_3C_2'\langle X,\partial \rangle\to \pi_3C_1'\langle X,\partial \rangle\oplus \pi_3C_1'\langle X,\partial \rangle
\end{equation}
is isomorphic to $\bZ[\pi_1(X)]$ (see \cite[Section 4]{lin2025mapping}). We identify $\bZ[\pi_1(X)\setminus\{1\}]$ with $\bZ[\pi_1(X)]/\bZ$.

The map $\bZ[\pi_1(X)\setminus\{1\}]\to \pi_1\Map_{\Delta}(\Delta^2,C_2'\langle X,\partial\rangle))$ is defined by first taking a lift of an element $\lambda\in \bZ[\pi_1(X)\setminus\{1\}]$ to $\pi_3C_2'\langle X,\partial \rangle$, then gluing it into the interior of the trivial element of $\pi_1\Map_{\Delta}(\Delta^2,C_2'\langle X,\partial\rangle))$, which is represented by a map $[0,1]\times \Delta^2 \to C_2'\langle X,\partial\rangle$. It is straightforward to verify that this map agrees with the composition
\[
\bZ[\pi_1(X)\setminus\{1\}]\to \pi_1\Emb^{sm}(I,X) \to \pi_1 \Map_{\Delta}(\Delta^2,C_2'\langle X,\partial\rangle),
\]
where the first map is given by the geometric Dax isomorphism.

There are three coface maps 
\[
\delta^i_2: C_1'\langle X,\partial \rangle \to C_2'\langle X,\partial \rangle.
\]
The composition homomorphism 
\begin{equation}
\label{composition_pi3_to_Zpi1}
 \pi_3(X)\oplus \bZ\cong C_1'\langle X,\partial \rangle
\xrightarrow{(\delta^0_2)_*-(\delta^1_2)_*+(\delta^2_2)_*} C_2'\langle X,\partial \rangle 
\end{equation}
has image in the kernel of \eqref{eqn_pi3C2'_to_two_pi3C1'}, which is isomorphic to $\bZ[\pi_1(X)]$. Moreover, the composition \eqref{composition_pi3_to_Zpi1} takes the generator of the $\bZ$-component in $\pi_3(X)\oplus \bZ$ to the generator represented by $1\in \pi_1(X)$ in $\bZ[\pi_1(X)]$. Therefore, \eqref{composition_pi3_to_Zpi1} induces a map 
\[
\pi_3(X) \to \bZ[\pi_1(X)\setminus\{1\}].
\]
This describes the first arrow in \eqref{eqn_exact_sequence_C2'}. If an element $\alpha\in \pi_3(X)$ is described by a $D^2$-family of immersed arcs in $X$, then there is a homotopy between the representatives of the images of $(\delta^1_2)_*(\alpha)$ and $(\delta^0_2)_*(\alpha)+(\delta^2_2)_*(\alpha)$ in $\pi_3C_1'\langle X,\partial \rangle \oplus \pi_3C_1'\langle X,\partial \rangle$ under \eqref{eqn_pi3C2'_to_two_pi3C1'}, which is given by moving the points along each immersed arc. The homotopy lifts to $C_2'\langle X,\partial \rangle$ if $\alpha$ is represented by a $D^2$-family of embedded curves. In general, each self-intersecting arc in the $D^2$-family gives rise to a contribution to the difference 
\[
(\delta^0_2)_*(\alpha)-(\delta^1_2)_*(\alpha)+(\delta^2_2)_*(\alpha),
\]
and the contribution agrees with the contribution in the definition of the corresponding map in the geometric Dax isomorphism.
\end{proof}

\subsection{Independence of the smooth structure}
Now we prove Proposition \ref{prop_Dax_independent_smooth_str}, which states that the Dax isomorphism \eqref{eqn_geometric_Dax} for smooth $4$-manifolds does not depend on the smooth structure.

\begin{proof}[Proof of Proposition \ref{prop_Dax_independent_smooth_str}]
Let $X$ be a smooth $4$-manifold. We will show that the reformulation of the Dax isomorphism given in Section \ref{subsec_Dax_by_emb_calculus} admits a further alternative description that does not depend on the smooth structure. The construction is based on the ideas from \cite{budney2025automorphism}. Similar constructions also appeared in \cite[Section 7]{lin2025mapping} and \cite{lin2026pseudo}.

Let $\widetilde{X}$ be the universal cover of $X$.
Let 
\begin{multline*}
C_n^\tau(X) =\{(x_1,\dots,x_n)\in \widetilde{X}^n\mid x_i\neq \eta (x_j) \text{ for all } i\neq j\\
\text{and all non-trivial deck transformations }\eta\}.
\end{multline*}
This is also a cosimplicial set, where the coface maps are defined by doubling points in the configuration, and the codegeneracy maps are defined by the forgetful maps. The cosimplicial structure is again Reedy cofibrant, and there is a map
\[
\Emb^{top}(I,X)\to \Map_{\Delta}(\Delta^n,C_2^\tau(X))
\]
defined by lifting the image of a configuration of points on $I$ to $\widetilde{X}$ using the path-lifting property. 

Consider the commutative diagram
\[
\begin{tikzcd}
  \Emb^{sm}(I,X) \ar[r,"\varphi_1"] \ar[d,"\varphi_2"] & \Emb^{top}(I,X) \ar[d,"\varphi_3"] \\
  \Map_{\Delta}(\Delta^2,C_2'\langle X,\partial\rangle) \ar[r,"\varphi_4"] & \Map_{\Delta}(\Delta^2,C_2^\tau(X))
\end{tikzcd}
\]
By Proposition \ref{prop_sm_to_top_iso}, the map $\varphi_1$ induces an isomorphism on $\pi_1$. Recall that by \cite[Theorem 5.4, Lemma 5.12]{sinha2009topology}, we know that the map $\varphi_2$ induces an isomorphism on $\pi_1$.

We claim that the map $\varphi_4$ also induces an isomorphism on $\pi_1$.
This follows from a direct calculation. Applying the Bousfield--Kan spectral sequence as formulated in \cite[Section 5]{lin2025mapping}, we have a long exact sequence
\begin{multline*}
    \oplus_3 \pi_3C_1^\tau(X) \to  \pi_3C_2^\tau(X) \to \pi_1 \Map_{\Delta}(\Delta^2,C_2^\tau(X))
    \\
    \to \oplus_3 \pi_2 C_1^\tau(X) \to \pi_2 C_2^\tau(X)\to 0.
\end{multline*}
And we have
\begin{align*}
 \pi_3C_1^\tau(X)&\cong \pi_3(X),\\
\pi_3C_2^\tau (X) &\cong  (\pi_3(X))^2 \oplus \mathbb{Z}[\pi_1(X)\setminus\{1\}],\\
\pi_2C_1^\tau(X)&\cong \pi_2(X),\\
\pi_2C_2^\tau(X)&\cong \pi_2(X)\oplus \pi_2(X) .
\end{align*}
The homomorphism 
\[
\oplus_3 \pi_3C_1^\tau(X)\to \pi_3C_2^\tau(X)
\]
takes two factors of $\pi_3(X)$ isomorphically to the two factors of $\pi_3(X)$ in the codomain. 
The homomorphism 
\[
\oplus_3 \pi_2(X) \cong \oplus_3 \pi_2C_1^\tau(X)\to \pi_2C_2^\tau(X)\cong \pi_2(X)\oplus \pi_2(X)
\]
takes $(a,b,c)$ to $(a-c,b-c)$.

As a result, we obtain a long exact sequence
\begin{equation}
\label{eqn_exact_sequence_tau}
\pi_3(X) \to \bZ[\pi_1(X)\setminus\{1\}]\to \pi_1\Map_{\Delta}(\Delta^2,C_2^\tau(X)) \to \pi_2(X)\to 0.
\end{equation}
Moreover, the map $\varphi_4$ induces a homomorphism of chain complexes from \eqref{eqn_exact_sequence_C2'} to \eqref{eqn_exact_sequence_tau}, and the induced maps are identities on all terms other than the term involving the mapping spaces. Hence, by the five lemma, we conclude that $\varphi_4$ induces an isomorphism on $\pi_1$.

Therefore, the Dax isomorphism \eqref{eqn_exact_sequence_C2'} is identified with the long exact sequence \eqref{eqn_exact_sequence_tau} via the map $\varphi_4$. It is clear that \eqref{eqn_exact_sequence_tau}  does not depend on the smooth structure of $X$, because all the spaces and maps are directly defined in the topological category.
\end{proof}

\section{Proof of the main theorem}

Now we can prove Theorem \ref{thm_main}.

\begin{proof}[Proof of Theorem \ref{thm_main}]
Recall that $p_0,p_1\in\partial X$ are fixed points that define the boundary conditions of the arcs, and that we fixed a smooth structure on $\partial X$. Let $f_0:I\to X$ be the base point of $\Emb^{lf}(I,X)$. 

Let $q$ be a different point on $\partial X$, and let $X'=X\setminus\{q\}$.
The inclusion of $X'$ in $X$ induces a map
\[
\varphi_1: \Emb^{top}(I,X')\to  \Emb^{top}(I,X).
\]
On the other hand, the identity embedding of $X$ to $X$ can be homotoped, via topological embeddings fixing a neighborhood of $\Ima(f_0)$, to an embedding of $X$ into $X'$. The resulting embedding yields a map
\[
\varphi_2: \Emb^{top}(I,X)\to  \Emb^{top}(I,X').
\]
It is clear that $\varphi_1\circ\varphi_2$ and $\varphi_2\circ\varphi_1$ are both homotopic to the identity relative to $f_0$. Therefore, the map $\varphi_1$ induces an isomorphism on $\pi_1$. Similarly, the inclusion of $X'$ into $X$ induces an isomorphism 
\[
\pi_1\Emb^{lf}(I,X')\to \pi_1\Emb^{lf}(I,X).
\]

 By \cite[Section 8.2]{freedman2014topology}, the manifold $X'$ admits a smooth structure such that $f_0$ is smooth. By the earlier results, we have a commutative diagram
\[
\begin{tikzcd}
\pi_1^{sm}\Emb(I,X') \arrow[r,"\cong"]  & \pi_1^{lf}\Emb(I,X') \arrow[r,"\cong"]\arrow[d,"\cong"]   & \pi_1^{top}\Emb(I,X') \arrow[d,"\cong"]\\
& \pi_1^{lf}\Emb(I,X) \arrow[r,"\cong"]           
& \pi_1^{top}\Emb(I,X)  
\end{tikzcd}
\]
This proves Part (1) of the theorem.

The (smooth) Dax isomorphism on $X'$ provides a Dax isomorphism for $\pi_1^{top}\Emb(I,X')$, which then yields a Dax isomorphism for $\pi_1^{top}\Emb(I,X)$. By Proposition \ref{prop_Dax_independent_smooth_str}, the Dax isomorphism for $X$ does not depend on the choice of the smooth structure of $X'$. Therefore, if $X$ is itself smooth, then the Dax isomorphism agrees with the smooth Dax isomorphism via the isomorphism in Proposition \ref{prop_sm_to_top_iso}. This proves Parts (2), (4) of the theorem. 

In order to achieve naturality, we need to make a canonical choice of the point $q$. Take $q$ to be a point on the same connected component of $\partial X$ as $p_0$. Then the point $q$ is unique up to ambient isotopy relative to a neighborhood of $\Ima(f_0)$. Therefore, the resulting Dax isomorphism is natural with respect to inclusions of $4$-manifolds. This finishes Part (3) of the theorem.
\end{proof}

\bibliographystyle{amsalpha}
\bibliography{references}

\end{document}